\newif\ifmtpro 
\DeclareMathAlphabet{\boocal}{U}{BOONDOX-cal}{m}{n}
\newtheorem{theorem}{Theorem}
\newtheorem{lemma}{Lemma}
\newtheorem{corollary}{Corollary}
\theoremstyle{definition}
\newtheorem{remark}{Remark}
\newcommand{\E}{\mathbb E} 
\newcommand{\Var}{\text{Var}}
\newcommand{\Z}{\mathbb Z}
\newcommand{\N}{\mathbb N}
\newcommand{\R}{\mathbb R}
\newcommand{\Pb}{\mathbb P}
\newcommand{\sF}{\mathfrak F}
\newcommand{\sL}{\mathfrak L}
\newcommand{\D}{\mathfrak{D}} 
\newcommand{\TsD}{\textup{T}\D}
\newcommand{\JF}[1]{\mathit{JF}(#1)}
   \newcommand{\lstp}{\mathscr{l}}
   \newcommand{\rstp}{\mathscr{\altr}}
   \newcommand{\CR}{\mathscr{R}(\Phi)}
   \newcommand{\Eq}{\boocal{E}} 
   \newcommand{\lstp}{l}
   \newcommand{\rstp}{r}
   \newcommand{\CR}{\mathcal{R}(\Phi)}
   \newcommand{\Eq}{\boocal{E}}
\begin{document}
\title{Two repelling random walks on $\Z$}
\author[F. P. A. Prado]{Fernado P. A. Prado}
\author[C. F. Coletti]{Cristian F. Coletti}
\address[C. F. Coletti]{Centro de Matem\'atica, Computa\c{c}\~ao e
       Cogni\c{c}\~ao, UFABC, Avenida dos Estados, 5001, Santo Andr\'e,
       S\~ao Paulo, Brasil}
\email{cristian.coletti@ufabc.br}
\thanks{C. F. Coletti was partially supported by grant
  \#17/10555-0 S\~ao Paulo Research Foundation (FAPESP)}
\address[F. P. A. Prado and
         R. A. Rosales]{Departameto de Computa\c{c}\~ao e
           Matem\'atica, Universidade de S\~ao Paulo, 
	   Avenida Bandeirantes 3900, Ribeir\~ao Preto, S\~ao Paulo, 
	   14040-901, Brasil}
\email{feprado@usp.br, rrosales@usp.br}
\author[R. A. Rosales]{Rafael A. Rosales}

\subjclass[2010]{Primary 60K35, Secondary 37C10}
\keywords{reinforced random walk, recurrence, transience, stochastic
  approximations, stability}
\date{22/17/2021}

\begin{abstract} We consider two interacting random walks on $\Z$ such
that the transition probability of one walk in one direction decreases
exponentially with the number of transitions of the other walk in that
direction. The joint process may thus be seen as two random walks
reinforced to repel each other. The strength of the repulsion is
further modulated in our model by a parameter $\beta \geq 0$. When
$\beta = 0$ both processes are independent symmetric random walks on
$\Z$, and hence recurrent. We show that both random walks are further 
recurrent if $\beta \in (0,1]$. We also show that these processes are
transient and diverge in opposite directions if $\beta > 2$.  The
case $\beta \in (1,2]$ remains widely open. Our results are obtained
by considering the dynamical system approach to stochastic
approximations.
\end{abstract}

\maketitle 

\section{Introduction}
We are concerned with the recurrence properties of two repelling
random walks $\{S^i_n$; $i=1,2, n\geq 0\}$ taking values on $\Z$ in
which the repulsion is determined by the full previous history of the
joint process. Formally, assume that $S_0^i,\ldots S_{n_0}^i \in \Z$
are known for given but arbitrary $n_0 \geq 1$, and let $\sF_n =
\sigma(\{S_k^1, S_k^2 : 0 \leq k \leq n\})$ be the natural filtration
generated by both walks. The transition probability for each process
is defined as
\begin{equation}\label{eqn:transProb}
\Pb\big(S_{n+1}^i = S_n^i + 1 \,  \big{|}  \,  \sF_n \big)
=
\psi\big((S_{n}^j - S_{0}^j)/n\big)
=
1 - \Pb\big(S_{n+1}^i = S_n^i - 1 \, \big{|} \, \sF_n \big),
\end{equation}
with $i=1,2$, $j=3-i$, $n \geq n_0$, and $\psi: [-1,1] \to [0,1]$,
defined by
\begin{equation}\label{eqn:psi}
  \psi(y) = \frac{1}{1 + \exp(\beta y)},
  \quad\beta \geq 0.
\end{equation}

When $\beta = 0$, then $\psi(y) = \frac{1}{2}$ for all $y \in [-1,1]$
and both $S^1_n$ and $S^2_n$ form two independent simple random walks
on $\Z$. To analyse the behaviour for $\beta > 0$, note that the
quantity $y = (S^j_n - S^j_0)/n$ represents the difference between the
proportions of times the $j$-th walk made a right and a left
transition up to time $n$. Thus, if $y$ is positive, then $S_{n+1}^i$
transits with highest probability $1 - \psi(y) > \frac12$ to the left. By
contrast if $y < 0$, that is, if $S_n^j$ has moved more to the left
than to the right, then $S_{n+1}^i$ moves to right with highest
probability $\psi(y) > \frac12$. It is worth mentioning that $\psi$
satisfies the following symmetry relation $\psi(-y) = 1-\psi(y)$, and
hence it is not biased in any direction, left or right. The parameter
$\beta$ strengthens the repulsion between the walks: the larger the
value of $\beta$, the higher is the probability each walk goes in the
direction less transited by the other walk.  For given arbitrary
initial conditions, the coordination of the walks towards a limiting
direction, if any, is far from trivial.

We regard a walk $S_n^i$ as recurrent (transient) if every vertex of
$\Z$ is visited by $S_n^i$ infinitely (only finitely) many times
almost surely. Our main results are stated as follows.

\begin{theorem}\label{th:trans} If $\beta > 2$, both random walks
$S^1_n$ and $S^2_n$ are transient and
\[
  \lim_{n\to\infty} S^1_n = 
  -\lim_{n\to\infty} S^2_n = \pm \infty \quad \textup{a.s.}
\]
\end{theorem}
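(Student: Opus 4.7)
I would recast the rescaled pair
$x_n := \bigl((S_n^1 - S_0^1)/n,\,(S_n^2 - S_0^2)/n\bigr) \in [-1,1]^2$
as a two-dimensional Robbins--Monro stochastic approximation and analyse the associated mean-field ODE. Writing $\xi_{n+1}^i := S_{n+1}^i - S_n^i \in \{-1,+1\}$, a direct manipulation of \eqref{eqn:transProb}--\eqref{eqn:psi} yields
\begin{equation*}
x_{n+1}^i - x_n^i = \frac{1}{n+1}\bigl(F_i(x_n) + U_{n+1}^i\bigr),
\qquad
F_i(x^1,x^2) = -\tanh(\beta x^{3-i}/2) - x^i,
\end{equation*}
for $i=1,2$, where $U_{n+1}^i := \xi_{n+1}^i - \E[\xi_{n+1}^i \mid \sF_n]$ is a bounded $\sF_{n+1}$-martingale difference. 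The mean-field ODE is $\dot x = F(x)$ on the invariant square $[-1,1]^2$.

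\textbf{Structure of the ODE.} Equilibria of $F$ solve $x^i = -\tanh(\beta x^{3-i}/2)$. The origin is always one; its Jacobian has eigenvalues $-1\pm\beta/2$, so for $\beta > 2$ the origin is a hyperbolic saddle with unstable eigenvector $(1,-1)$. The anti-diagonal $\{x^2 = -x^1\}$ is invariant, and the induced one-dimensional ODE $\dot u = \tanh(\beta u/2) - u$ has, for $\beta > 2$, two additional roots $\pm x^\star$ satisfying $x^\star = \tanh(\beta x^\star/2)$. Differentiating this fixed-point relation gives $(\beta/2)(1 - (x^\star)^2) < 1$, so the full Jacobian at $(\pm x^\star, \mp x^\star)$ has the strictly negative eigenvalues $-1 \pm (\beta/2)(1 - (x^\star)^2)$, identifying these two points as hyperbolic sinks.

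\textbf{Asymptotic behaviour of $x_n$.} Since $\partial F_i/\partial x^{3-i} < 0$ on $[-1,1]^2$, the planar flow is competitive; Hirsch's theorem for two-dimensional competitive systems combined with Poincar\'e--Bendixson excludes periodic orbits and forces every $\omega$-limit set to be an equilibrium, so the chain-recurrent set of $F$ equals $\{(0,0),\,(x^\star,-x^\star),\,(-x^\star,x^\star)\}$. Standard stochastic approximation theory (Bena\"{\i}m's theorem) then implies that the limit set of $(x_n)$ is a.s.\ an internally chain-transitive invariant set, hence a.s.\ a single equilibrium. To exclude convergence to the saddle I would apply the Pemantle / Bena\"{\i}m--Hirsch non-convergence theorem for linearly unstable equilibria: the noise along the unstable eigenvector $(1,-1)$ is $U_{n+1}^1 - U_{n+1}^2$, whose conditional variance equals the sum of the two marginal variances by independence of the two driving coin tosses and is bounded below by a positive constant in a neighbourhood of the origin since $\psi(0) = 1/2$ and $\psi$ is continuous. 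Consequently $\Pb(x_n \to 0) = 0$, and a.s.\ $x_n \to (x^\star,-x^\star)$ or $(-x^\star,x^\star)$. Because $S_n^i/n = x_n^i + S_0^i/n \to \pm x^\star$ with opposite signs for $i=1,2$, we conclude $S_n^1 \to \pm\infty$ and $S_n^2 \to \mp\infty$ a.s., which in particular gives the claimed transience.

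\textbf{Anticipated main obstacle.} The sharpest point is the structural analysis of the ODE in the third paragraph: $F$ is not a gradient, so no strict Lyapunov function is immediately available, and the reduction of the chain-recurrent set to the three equilibria must be deduced from the 2D competitive structure combined with hyperbolicity of all equilibria. The noise non-degeneracy needed for the non-convergence theorem is routine once one notes that the two Bernoulli increments are conditionally independent given $\sF_n$ by the standing construction of the joint process.
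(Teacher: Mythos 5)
Your proposal is correct and follows essentially the same route as the paper: recast the rescaled walk as a planar stochastic approximation, classify the equilibria of the mean-field ODE, invoke Bena\"im's limit-set theorem, and rule out convergence to the unstable equilibrium via Pemantle's non-convergence theorem with the same conditional-independence lower bound on the noise. Your coordinates $x^i=2X^i_\rstp-1$ and the $\tanh$ form of the drift are an affine reparametrisation of the paper's field $F=-x+\pi(x)$ on $\D$, and your eigenvalue computations at the three equilibria agree with Lemma~\ref{lem:eqtypes} (your concavity argument giving $(\beta/2)(1-(x^\star)^2)<1$ is the mirror image of the paper's convexity argument showing $w<w_*$, though ``differentiating the fixed-point relation'' is not literally how one gets it). The one substantive divergence is the mechanism for reducing the chain-recurrent set to the equilibria: the paper computes $\operatorname{div}F=-2$ and applies Bena\"im's area-contraction results (Theorems 6.12 and 6.15 of \cite{B99}), whereas you invoke competitiveness plus Poincar\'e--Bendixson. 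The latter excludes periodic orbits and shows every $\omega$-limit set of the \emph{flow} is an equilibrium, but to conclude that $\CR=\Eq$ (equivalently, that every internally chain-transitive set is a singleton) you must also exclude cyclic orbit chains, which does not follow from trajectory-wise convergence alone. This is easily repaired in your setting --- the unstable manifold of the saddle is the anti-diagonal, which flows into the two sinks, so no homoclinic chain exists; or simply observe that your $F$ also has divergence $-2$ and borrow the paper's argument --- but the step should be made explicit rather than folded into ``hence the chain-recurrent set equals'' the three equilibria.
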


\begin{theorem}\label{th:rec}
If $\beta \in [0, 1]$, then both $S_n^1$ and $S_n^2$ are recurrent.
\end{theorem}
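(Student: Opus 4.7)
The natural setup is via stochastic approximation applied to the normalized positions $x^i_n := (S^i_n - S^i_0)/n \in [-1,1]$. Using the identity $2\psi(y) - 1 = -\tanh(\beta y/2)$, I would first rewrite the recursion in Robbins--Monro form
\[
x^i_{n+1} - x^i_n = \frac{1}{n+1}\bigl( h_i(x^1_n, x^2_n) + M^i_{n+1} \bigr), \qquad h_i(u_1, u_2) := -u_i - \tanh(\beta u_{3-i}/2),
\]
with $(M^i_n)$ a uniformly bounded $\sF_n$-martingale difference. Hence $(x^1_n, x^2_n)$ is a stochastic approximation with step $1/(n+1)$ and mean field $h$, and its long-term behavior is governed by the autonomous ODE $\dot u = h(u)$ on $[-1,1]^2$.

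I would then establish global stability of the origin for this ODE. The origin is the unique equilibrium, and using the Lyapunov function $V(u) = u_1^2 + u_2^2$ together with the elementary bound $|\tanh(\beta u/2)| \leq (\beta/2)|u|$,
\[
\dot V = -2V - 2\bigl(u_1 \tanh(\beta u_2/2) + u_2 \tanh(\beta u_1/2)\bigr) \leq -(2 - \beta)\,V,
\]
which for $\beta \leq 1$ gives $\dot V \leq -V$: the origin is globally exponentially asymptotically stable. The standard dynamical-system (ODE) method for stochastic approximations then delivers $(x^1_n, x^2_n) \to (0,0)$ almost surely, and in particular $S^i_n = o(n)$ a.s.

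The main obstacle is that $S^i_n = o(n)$ does not by itself imply recurrence on $\Z$, so the ODE limit must be supplemented by a finer, fluctuation-scale analysis. For $\beta \leq 1$, the eigenvalues $-1 \pm \beta/2$ of the Jacobian of $h$ at the origin lie in $[-3/2,-1/2]$, which is precisely the regime in which a central limit theorem for the stochastic approximation holds with diffusive scaling: $\sqrt n\,(x^1_n, x^2_n)$ is tight and, for $\beta < 1$, converges in distribution to a nondegenerate centered Gaussian on $\R^2$. This yields the estimate $S^i_n = O(\sqrt n)$ at the fluctuation scale, with law symmetric about $0$ by virtue of the reflection symmetry $\psi(-y) = 1 - \psi(y)$, which makes the joint law of $(S^1_n, S^2_n)$ invariant under negation.

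Finally, I would translate this distributional information into almost-sure recurrence via a sign-change argument. Since each walk is nearest-neighbor on $\Z$, any change in the sign of $S^i_n$ forces a visit to $0$. The symmetric diffusive fluctuations, combined with a Borel--Cantelli type argument applied to the events $\{S^i_n \leq -c\sqrt n\}$ and $\{S^i_n \geq c\sqrt n\}$ along a suitably sparse subsequence on which the required asymptotic independence can be extracted, would show that $\liminf_n S^i_n < 0 < \limsup_n S^i_n$ a.s., and hence that each walk returns to $0$ infinitely often. The delicate boundary case is $\beta = 1$, where one eigenvalue equals $-1/2$ and the Gaussian scaling degenerates into logarithmic corrections: this is where the hypothesis $\beta \leq 1$ becomes truly binding (as opposed to the weaker $\beta < 2$ that suffices for ODE stability alone), and this is the step I expect to require the most delicate work.
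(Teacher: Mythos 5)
Your first half is sound and in fact slightly different from the paper's: the reduction to a planar stochastic approximation with mean field $h_i(u)=-u_i-\tanh(\beta u_{3-i}/2)$ is exactly the paper's field in centered coordinates, and your Lyapunov computation $\dot V\le -(2-\beta)V$ is a clean substitute for the paper's route to $X(n)\to x_*$ (which goes through chain recurrence, negative divergence and Poincar\'e--Bendixson-type results of Bena\"im). The genuine gap is in the second half. A distributional CLT for $\sqrt n\,(x^1_n,x^2_n)$ is both too weak and unavailable at the endpoint: too weak because a symmetric limit law for $S^i_n/\sqrt n$ does not preclude the walk from eventually fixing its sign (this is exactly what happens for $\beta>2$, where the limiting law of $S^i_n/n$ is also symmetric), and unavailable at $\beta=1$, where the relevant eigenvalue equals $-1/2$ and the Gaussian scaling degenerates -- you flag this but do not resolve it. The step you describe as ``Borel--Cantelli along a sparse subsequence on which the required asymptotic independence can be extracted'' is the entire difficulty: the increments of $S^i_n$ depend on the full history through the reinforcement, no Kolmogorov or Hewitt--Savage zero--one law applies to them directly, and you give no mechanism for producing the independence you invoke.

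The paper closes this gap differently, and you would need something equivalent. First, instead of a CLT it proves an \emph{almost sure} envelope: by the shadowing estimates of Bena\"im (rate $n^{-\min\{1/2,\,\zeta\}}$ with $\zeta=1-\beta/2$, which equals $n^{-1/2}$ for all $\beta\in[0,1]$, including $\beta=1$), the conditional transition probabilities satisfy $|\pi^i_\rstp(X(n))-\tfrac12|\le b/\sqrt n$ for all large $n$ on an event of probability at least $1-\varepsilon$. Second, on that event it couples $S^i_n$, through common uniforms, with a walk $Z_n$ having \emph{independent} increments and the deterministic worst-case bias $p_n=\tfrac12-b/\sqrt n$, so that $S^i_n\ge Z_n$. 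For $Z_n$ the tail event $\{\limsup_n Z_n/\sigma_n\ge c\}$ is genuinely covered by Kolmogorov's zero--one law, and the Lindeberg CLT shows it has positive probability because the accumulated drift $\sum_k b/\sqrt k\asymp\sqrt n$ is of the same order as $\sigma_n$ -- this is precisely why the exponent $1/2$ is critical and why the argument stops at $\beta=1$. Without this domination by an independent-increment walk (or a bespoke zero--one law for the reinforced process itself), your final step does not go through.
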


\begin{remark}\label{conj:recurrence} The case $\beta = 0$ is
trivial. Indeed, when $\beta=0$, both $S_n^1$ and $S_n^2$ are two
independent simple symmetric random walks and hence recurrent. The
case $\beta \in (1, 2]$ remains widely open. The problem that arises
in this case is mentioned the end of this article in
Remark~\ref{remark:speed}.
\end{remark}

According to (\ref{eqn:transProb}) and (\ref{eqn:psi}), the
probability of a transition in a given direction decreases with the
number of previous transitions made by the opponent walk in that
direction. This allows to recognise the process studied throughout as
being formed by two interacting reinforced random walks, namely one in
which the reinforcement is set by repulsive behaviour of each
walk. Self-attracting reinforced random walks were formally introduced
in an unpublished paper by D. Coppersmith and P. Diaconis and have
since been the subject of intense research, see for instance
\cite{D90}, \cite{P92}, \cite{B97}, \cite{V01}, \cite{T04},
\cite{MR09}, \cite{ACK14}, and \cite{CT17}. Self-repelling walks, have
also deserved some attention, see \cite{T95}, \cite{T01} and
references therein. The recurrence properties of self-attracting walks
have been considered among others by \cite{S06}, \cite{MR09},
\cite{S14} and \cite{CK14}.  With the exception of \cite{C2014}, there
are relatively few studies of interacting vertex reinforced random
walks with `competition' or `cooperation'. \cite{C2014} considers two
random walks that compete for the vertices of finite complete graphs
and focuses on the asymptotic properties of the overlap of their
vertex occupation measures.

In this article, we study the recurrence properties of $S_n^i$,
$i=1,2$, by analysing the proportions of times each walk $i$ makes a
left and a right transition up to time $n$. To do so, we identify the
vector of empirical measures defined by these proportions with a
stochastic approximation process.  The latter have been quite
effective while dealing with several reinforced processes such as
vertex reinforced walks and generalized P\'olya urns, see \cite{P07}
for a survey and further references.  More precisely, we study the
asymptotic behaviour of the involved stochastic approximation by
considering the dynamical system approach described in \cite{B96} and
\cite{B99}. The rest of this article is organised as
follows. Section~\ref{sec:DS} shows that the vector of empirical
measures of the times that each walk makes a left and a right
transition forms a stochastic approximation process. This process is
related to the flow induced by a smooth vector field defined on the
product of two 1-simplices. It is therefore sufficient to consider the
planar dynamics defined by the restriction of the field to the unit
square. This together with the fact that the vector field has negative
divergence suffices to show that the limit set of the stochastic
approximation process corresponds to the set of equilibria of the
vector field. Section~\ref{sec:DS} presents a characterisation of the
equilibria in terms of the repulsion parameter $\beta$, and then shows
that the stochastic approximation process converges to stable
equilibria and does not converges to unstable
equilibria. Section~\ref{sec:recurrence} finally presents the proof of
Theorems~\ref{th:trans} and \ref{th:rec}. The proof of
Theorem~\ref{th:trans} is a straightforward application of the results
in Section~\ref{sec:DS}. By contrast, the proof of
Theorem~\ref{th:rec} is more involved. Beyond showing that the
proportion of times each walk makes a left and a right transition
converges toward $\frac{1}{2}$, the proof of Theorem~\ref{th:rec}
relies on an estimate for the
speed of convergence, a zero-one law and a coupling  argument.

\section{The dynamical system approach}\label{sec:DS}
\subsection{Stochastic approximations}
For $n \geq 0$, $i =1,2$, define
\begin{equation}\label{eqn:xis}
  \xi(n) = \big (\xi^1_\lstp(n), \xi^1_\rstp(n), 
  \xi^2_\lstp(n), \xi^2_\rstp(n) \big),
 \,\,\,
  \xi^i_{\lstp}(n) = \mathbf 1_{\{S_{n+1}^i - S_n^i = - 1\}},
  \,\,\,
  \xi^i_{\rstp}(n) = \mathbf 1_{\{S_{n+1}^i - S_n^i =  1\}},
\end{equation}
and then let
\begin{equation}\label{eqn:xxis}
 X^i_{\lstp}(n) = \frac{1}{n}\sum_{k=0}^{n-1} \xi^i_{\lstp}(k),
 \qquad
 X^i_{\rstp}(n) = \frac{1}{n}\sum_{k=0}^{n-1} \xi^i_{\rstp}(k),
\end{equation}
be the proportion of left and right transitions of the $i$-th walk up
to time $n$. Hereafter we denote by $X = \{X(n)\}_{n \geq 0}$ the
process determined by $X(n) = (X^1_\lstp(n), X^1_\rstp(n),
X^2_\lstp(n), X^2_\rstp(n))$, defined on a suitable probability space
$(\Omega, \sF, \Pb)$.

The process $X$ takes values on the set $\D =
\triangle\times\triangle$, which equals the two-fold Cartesian product
of the one-dimensional simplex $\triangle = \{x \in \R^2 \mid x_v \geq
0, \sum_v x_v = 1\}$.  We will hereafter use $(x^1_\lstp, x^1_\rstp,
x^2_\lstp, x^2_\rstp)$ to denote the coordinates of any point $x \in
\D$ and also $x^i = (x^i_\lstp, x^i_\rstp)$ for $i=1, 2$. Let $\TsD =
\{(x^1, x^2) \in \R^{2\times 2} \mid x^i_\lstp+x^i_\rstp = 0, i=1,2\}$
be the tangent space of $\D$. Now, let $\pi : \D \to\D$ be the map
\begin{equation}\label{eqn:pifirst}
  x \mapsto \pi(x)
  =
  \big(\pi^1_\lstp(x), \pi^1_\rstp(x),
  \pi^2_\lstp(x), \pi^2_\rstp(x)\big)
\end{equation}
where for $i=1,2$ and $v = \lstp, \rstp$,
\begin{equation}
 \label{eqn:pipsi}
  \pi^i_v(x) = \psi(2 x_v^j -1), \qquad j = 3 - i. 
\end{equation}
For further computations, it is also worth observing that, since
$x^j \in \triangle$,
\begin{equation}\label{eqn:pipsi-novo}
 \pi^i_v(x) =  \frac{e^{-\beta x^j_v}}{e^{-\beta x^j_\lstp} 
 	+ e^{-\beta x^j_\rstp}}.
\end{equation}

\begin{lemma}\label{lem:X_is_SA}
The process $X=\{X(n)\}_{n\geq 0}$ satisfies the following recursion
\begin{equation}
  \label{eqn:SA} 
  X(n+1) - X(n) = \gamma_n(F(X(n))+U_n)
\end{equation}
where
\begin{equation}
	\label{eqn:gamma_and_U}
  \gamma_n = \frac{1}{n+1},
  \qquad\qquad
  U_n = \xi(n) - \E[\xi(n)\mid\sF_n]
\end{equation}
and $F:\D \to \TsD$ is the vector field $F= (F^1_\lstp, F^1_\rstp,
F^2_\lstp, F^2_\rstp)$ defined by
\begin{equation}
 \label{eqn:TheField}
  F(X(n)) = -X(n) + \pi(X(n)).
\end{equation}
\end{lemma}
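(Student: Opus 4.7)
The plan is to derive the recursion by a short computation from the definition of $X(n)$, and then identify the conditional drift $\E[\xi(n)\mid\sF_n]$ with $\pi(X(n))$.

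First, definition (\ref{eqn:xxis}) gives, for each coordinate, the telescoping identity
$$X^i_v(n+1) = \frac{n}{n+1}\, X^i_v(n) + \frac{1}{n+1}\, \xi^i_v(n),$$
which rearranges immediately to $X(n+1) - X(n) = \gamma_n(\xi(n) - X(n))$ with $\gamma_n = 1/(n+1)$. Writing $\xi(n) - X(n) = U_n + \bigl(\E[\xi(n)\mid\sF_n] - X(n)\bigr)$ and recalling (\ref{eqn:TheField}), the lemma reduces to the single claim $\E[\xi(n)\mid\sF_n] = \pi(X(n))$.

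This identification is the only substantive step. The key bookkeeping is the identity $(S^j_n - S^j_0)/n = 2X^j_\rstp(n) - 1 = -(2X^j_\lstp(n) - 1)$, obtained from $\xi^j_\lstp(k) + \xi^j_\rstp(k) = 1$ by telescoping the increments $S^j_{k+1} - S^j_k = \xi^j_\rstp(k) - \xi^j_\lstp(k)$. With this in hand, (\ref{eqn:transProb}) gives $\E[\xi^i_\rstp(n)\mid\sF_n] = \psi(2X^j_\rstp(n) - 1) = \pi^i_\rstp(X(n))$, and for the left coordinate the symmetry $\psi(-y) = 1 - \psi(y)$ noted after (\ref{eqn:psi}) yields $\E[\xi^i_\lstp(n)\mid\sF_n] = \psi(2X^j_\lstp(n) - 1) = \pi^i_\lstp(X(n))$. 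Subtracting $X(n)$ then produces the field $F$ prescribed by (\ref{eqn:TheField}).

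No genuine obstacle is expected here: the argument is purely algebraic, and the only point worth emphasising is the translation between the normalised walk position $(S^j_n - S^j_0)/n$ appearing in (\ref{eqn:transProb}) and the empirical proportion $X^j_\rstp(n)$, which is what produces $\pi$ inside the drift. The noise term $U_n$ is by construction a martingale difference with respect to $\sF_n$, so the claimed decomposition (\ref{eqn:SA})--(\ref{eqn:gamma_and_U}) coincides with the standard stochastic approximation form that is needed in Section~\ref{sec:DS}.
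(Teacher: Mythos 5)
Your proposal is correct and follows essentially the same route as the paper's proof: telescope the definition \eqref{eqn:xxis} to get $X(n+1)-X(n)=\gamma_n(\xi(n)-X(n))$, then reduce everything to the identity $\E[\xi(n)\mid\sF_n]=\pi(X(n))$, which you verify exactly as the paper does, via $(S^j_n-S^j_0)/n = 2X^j_\rstp(n)-1$ together with the symmetry $\psi(-y)=1-\psi(y)$. No gaps.
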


The proof of Lemma~\ref{lem:X_is_SA} is presented in the Appendix. A
discrete time process whose increments are recursively computed
according to (\ref{eqn:SA}) is known as a stochastic
approximation. Provided the random term $U_n$ can be damped by
$\gamma_n$, (\ref{eqn:SA}) may be thought as a Cauchy-Euler
approximation scheme, $x(n+1) - x(n) = \gamma_n F(x(n))$, for the
numerical solution of the autonomous ODE
\[
  \dot x = F(x).
\]
Under this perspective, a natural approach to determine the limit
behaviour of the process $X$ consists in studying the asymptotic
properties of the related ODE. This heuristic, known as the ODE
method, has been rather effective while studying various reinforced
stochastic processes.

Let $x = (x^1_\lstp, x^1_\rstp, x^2_\lstp, x^2_\rstp)$ be a generic
point of $\D$. By (\ref{eqn:TheField}), the ODE determined
by the stochastic approximation in our case is given by the equation 
\begin{equation}
  \label{eqn:ODE}
  \dot x = F(x) = -x + \pi(x).
\end{equation}
By using (\ref{eqn:pipsi-novo}), equation (\ref{eqn:ODE}) explicitly
reads as
\begin{equation}
  \label{eqn:ODEexplicit}
  \begin{aligned}
   &\frac{d}{dt} x^1_v
   = 
   -x^1_v + \frac{e^{-\beta x^2_v}}{e^{-\beta x^2_\lstp} 
     + e^{-\beta x^2_\rstp}},
   \\ 
   &\frac{d}{dt} x^2_v
   = 
    -x^2_v + \frac{e^{-\beta x^1_v}}{e^{-\beta x^1_\lstp}
       + e^{-\beta x^1_\rstp}},
  \end{aligned}
  \qquad
  v = \lstp, \rstp.
\end{equation}
Because each $x^1$ and $x^2$ assume values on the one-dimensional
simplex $\triangle$, the system of four equations described by
\eqref{eqn:ODEexplicit} can be reduced to two equations; for instance,
those governing the evolution of $x^1_\lstp$ and
$x^2_\lstp$. The dynamics of the ODE in \eqref{eqn:ODE} can therefore
described on the unit square $[0, 1]^2$ by identifying the field
with its projection $F \equiv (F^1_\lstp, F^2_\lstp)$. This
observation allows to use the dynamical system approach to planar
stochastic approximations described in \cite{BH99} and
\cite{B99}. Theorem~\ref{th:X_as_convergence}, stated bellow, is a
consequence of this. It provides a crucial characterisation for the
asymptotic behaviour of the process $X$.

A point $x \in \D$ is an equilibrium of $F$ if $F(x) =
0$. The set of equilibria of $F$ will hereafter be denoted by
$\Eq$.

\begin{theorem}\label{th:X_as_convergence} Let $X = \{X(n)\}_{n\geq
0}$ be a process satisfying the recursion \eqref{eqn:SA}. For any
$\beta \in [0, \infty){\setminus}\{2\}$, the process $X$ converges
almost surely toward an equilibrium of the vector field $F$ defined in
\eqref{eqn:TheField}.
\end{theorem}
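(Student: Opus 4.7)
The plan is to apply the standard ODE method for stochastic approximations developed in \cite{B99} and then to exploit the planar structure of the reduced flow in order to rule out non-trivial asymptotic recurrence. First I would note that Lemma~\ref{lem:X_is_SA} places $X$ in the Robbins--Monro framework: the step sizes $\gamma_n = 1/(n+1)$ satisfy $\sum_n \gamma_n = \infty$ and $\sum_n \gamma_n^2 < \infty$; the vector field $F$ is smooth on the compact set $\D$; and the noise $\{U_n\}$ is a uniformly bounded $\sF_n$-martingale-difference sequence. Benaïm's asymptotic-pseudotrajectory theorem then yields that, almost surely, the limit set $L(X) = \bigcap_{m\geq 0}\overline{\{X(n) : n \geq m\}}$ is a non-empty, compact, connected, internally chain-transitive subset of the flow induced by \eqref{eqn:ODE}.

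Next I would reduce the problem to a planar system. Since $x^i_\lstp + x^i_\rstp = 1$ on $\D$, the ODE \eqref{eqn:ODEexplicit} is equivalent to the two-dimensional system on $[0,1]^2$ in the coordinates $(x^1_\lstp, x^2_\lstp)$, namely
\[
\dot x^1_\lstp = -x^1_\lstp + \frac{1}{1 + e^{-\beta(1 - 2 x^2_\lstp)}},
\qquad
\dot x^2_\lstp = -x^2_\lstp + \frac{1}{1 + e^{-\beta(1 - 2 x^1_\lstp)}},
\]
whose divergence is identically $-2$, and a direct inspection of the boundary shows that $[0,1]^2$ is forward-invariant. The Bendixson--Dulac criterion therefore rules out periodic orbits in $[0,1]^2$. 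The stronger statement I need, that every internally chain-transitive set is contained in the equilibrium set $\Eq$, follows from the planar results for stochastic approximations with sign-definite divergence in \cite{BH99} and \cite{B99}, which additionally preclude heteroclinic cycles and other composite chain-recurrent configurations.

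Finally I would argue that $\Eq$ is finite whenever $\beta \neq 2$. Equilibria satisfy $x^1_\lstp = g(x^2_\lstp)$ and $x^2_\lstp = g(x^1_\lstp)$, where $g(y) = 1/(1 + e^{-\beta(1 - 2y)})$, so $x^1_\lstp$ must be a fixed point of the strictly increasing map $g \circ g$. The symmetric point $(1/2, 1/2)$ is always an equilibrium, and its Jacobian eigenvalues $-1 \pm \beta/2$ show it is hyperbolic precisely when $\beta \neq 2$; an elementary implicit-function analysis shows that the additional equilibria that bifurcate as $\beta$ crosses $2$ remain hyperbolic for $\beta \neq 2$, so $\Eq$ is a finite set of isolated points. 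Since a connected subset of a finite set is a point, $L(X)$ is almost surely a single equilibrium $x^\ast \in \Eq$, whence $X(n) \to x^\ast$ almost surely.

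The step I expect to be the main obstacle is the upgrade from \emph{no periodic orbits} (Bendixson--Dulac) to \emph{every internally chain-transitive set is contained in $\Eq$}: this does not follow from the absence of periodic orbits alone, and its cleanest formulation relies on the planar chain-recurrence theory of \cite{BH99} together with the strict sign of the divergence. The role of the hypothesis $\beta \neq 2$ is purely local: at $\beta = 2$ the eigenvalue $-1 + \beta/2$ vanishes, $(1/2,1/2)$ becomes non-hyperbolic, and one cannot in principle exclude a continuum of equilibria in a neighbourhood of the bifurcation, which would break the final connectedness argument.
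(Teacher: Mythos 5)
Your proposal is correct and follows essentially the same route as the paper: Bena\"im's limit-set theorem placing $\sL\big(\{X(n)\}\big)$ inside the chain-recurrent set, the planar reduction with divergence $-2$ combined with the planar chain-recurrence results of \cite{B99} (Theorems 6.12 and 6.15) to get $\CR \subset \Eq$, and isolatedness of the equilibria plus connectedness of the limit set to conclude convergence to a single point. Your observation that Bendixson--Dulac alone is insufficient is precisely the point the paper addresses by invoking Theorem 6.15 of \cite{B99} to exclude cyclic orbit chains as well as periodic orbits.
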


The proof of Theorem~\ref{th:X_as_convergence} is presented in
Section~\ref{sec:ProofTh3}. We present first a description of the
equilibria of the vector field $F$.

\subsection{Equilibria}
This section identifies the equilibria of vector field defined by
(\ref{eqn:TheField}) and further studies their stability depending on
the repulsion parameter $\beta$. For any point $x \in \D$, let
$\JF{x}$ be the Jacobian matrix of the vector field $F$ at $x$ and let
$\sigma(\JF{x})$ be the set of its eigenvalues.  The equilibrium $x$
is hyperbolic if all the eigenvalues of $\sigma(\JF{x})$ have non-zero
real parts. The hyperbolic equilibrium $x$ is linearly stable if
$\sigma(\JF{x})$ contains only eigenvalues with negative real parts;
otherwise $x$ is said to be linearly unstable.

\begin{lemma}\label{lem:EquilibriaForm}
Let $g : [0,1]\to [0,1]$ be a strictly decreasing function and such
that $g(1-w)=1-g(w)$ for all $w \in [0,1]$.  Let $E_1$ and $E_2$ be
the sets defined as
\begin{align*}
  &E_1 = \Big\{x\in\D\ \Big|\ x_v^i = g(x_v^j)\  \text{for all $i$,
    $v$ and  $j=3-i$} \Big\},  \\
  &E_2 = \Big\{x\in\D\ \Big|\ x = (w, 1-w, 1-w, w) \text{ where }
    w=g(1-w) \Big\}.
\end{align*} 	
Then $E_1 = E_2$.  In particular, $\big(\frac{1}{2}, \frac{1}{2},
\frac{1}{2}, \frac{1}{2}\big) \in E_2$, and $(w, 1-w, 1-w, w) \in E_2$
if and only if $(1-w, w, w, 1-w) \in E_2$.
\end{lemma}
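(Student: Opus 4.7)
I would verify the two inclusions $E_2 \subseteq E_1$ and $E_1 \subseteq E_2$ separately. The forward inclusion is a routine check from the definitions, whereas the reverse inclusion requires combining the strict monotonicity of $g$ with the symmetry relation $g(1-w) = 1 - g(w)$; this is where the actual content of the lemma lies.

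For $E_2 \subseteq E_1$, I would take $x = (w, 1-w, 1-w, w)$ with $w = g(1-w)$ and verify the defining equation $x_v^i = g(x_v^j)$ in each of the four cases $i\in\{1,2\}$, $v\in\{\lstp,\rstp\}$. A short computation shows that each of the four equations reduces to either $w = g(1-w)$ or $1-w = g(w)$; the former is the hypothesis, and the latter follows from it via the symmetry $g(w) = 1 - g(1-w) = 1 - w$.

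For $E_1 \subseteq E_2$, I would parametrize a generic $x \in E_1$ by $a = x^1_\lstp$ and $b = x^2_\lstp$; since $x^i \in \triangle$, one has $x^1_\rstp = 1-a$ and $x^2_\rstp = 1-b$. Among the four defining equations of $E_1$, the relations for $v = \lstp$ give $a = g(b)$ and $b = g(a)$, while the $v = \rstp$ ones read $1-a = g(1-b)$ and $1-b = g(1-a)$ and are automatic from the first pair and the symmetry of $g$. The crux is then to prove that $a + b = 1$, after which setting $w = a$ writes $x = (w, 1-w, 1-w, w)$ with $w = a = g(b) = g(1-w)$, placing $x$ in $E_2$. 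If $a = b$, the relation $a = g(a)$ together with strict decrease and $g(\tfrac12) = \tfrac12$ (obtained by evaluating the symmetry at $w = \tfrac12$) forces $a = \tfrac12$, so $a + b = 1$. If instead $a \neq b$, assume WLOG $a < b$ and suppose for contradiction that $a + b < 1$; then $a < 1-b$, and strict decrease of $g$ yields $g(a) > g(1-b) = 1 - g(b) = 1 - a$, i.e., $b > 1 - a$, so $a + b > 1$, a contradiction. The case $a + b > 1$ is handled by the symmetric argument. Hence $a + b = 1$, as desired.

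The ``in particular'' assertions are then immediate: $g(\tfrac12) = \tfrac12$ has already been observed, so $(\tfrac12,\tfrac12,\tfrac12,\tfrac12)$ satisfies $w = g(1-w)$; and the equivalence $(w,1-w,1-w,w) \in E_2 \Leftrightarrow (1-w,w,w,1-w) \in E_2$ follows by substituting $w \mapsto 1-w$ in the defining relation $w = g(1-w)$ and applying the symmetry. I expect the main obstacle in the proof to be the identity $a + b = 1$ under the coupled fixed-point conditions; attacking it by manipulating $a = g(g(a))$ algebraically would be awkward, but the inequality argument above, which uses only strict monotonicity and the symmetry of $g$ at $1-b$, is clean and self-contained.
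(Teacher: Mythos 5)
Your proof is correct and follows essentially the same route as the paper's: both directions reduce to checking the four defining relations, and the crux in $E_1\subseteq E_2$ is the anti-diagonal identity ($a+b=1$, i.e.\ $x^1_\lstp=x^2_\rstp$), which you and the paper both obtain by a contradiction argument from strict monotonicity of $g$. The only cosmetic difference is that the paper's contradiction uses the cross relations $x^2_\lstp=g(x^1_\lstp)$, $x^1_\rstp=g(x^2_\rstp)$ together with the simplex constraints, whereas you invoke the symmetry $g(1-w)=1-g(w)$ at that step; both are valid.
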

\begin{proof}
Assume $x \in E_1$. First we show that $x_\lstp^1 =
x_\rstp^2$. Suppose by contradiction, and without loss of generality,
that $x_\rstp^2 < x_\lstp^1$. Since $g$ is strictly decreasing, we
would have that $1 - x_\rstp^2 = x_\lstp^2 = g(x_\lstp^1) <
g(x_\rstp^2) = x_\rstp^1 = 1- x_\lstp^1$, contradicting the hypothesis
that $x_\rstp^2 < x_\lstp^1$. Since $x_\lstp^1 = x_\rstp^2$, by
setting $w = x_\lstp^1 = x_\rstp^2$, we have that $x_\rstp^1 =
x_\lstp^2 = (1 - w)$. To conclude that $x \in E_2$, it sufficient to
observe that $w = g(1 -w)$. Indeed,
\[
  w=x^1_\lstp = g(x^2_\lstp) = g(1 - x^2_\rstp) =  g(1 -w). 
\]
The second inequality holds because $x \in E_1$ and the third, because
$x^2_\lstp = 1 - x^2_\rstp$.

Conversely, assume that $x \in E_2$. Then $(x^1_\lstp, x^1_\rstp,
x^2_\lstp, x^2_\rstp) = (w, 1-w, 1-w, w)$ for some $w$ with $w = g(1-
w)$. As an immediate consequence, we have that $x_\lstp^1 =
g(x_\lstp^2)$ and $x_\rstp^2 = g(x_\rstp^1)$. To conclude, we show
next that $x_\rstp^1 = g(x_\rstp^2)$ and $x_\lstp^2 =
g(x_\lstp^1)$. Indeed,
\[
  x_\rstp^1 = x_\lstp^2 = 1 - w = 1 - g(1 - w) = 1 - (1 - g(w)) = g(w) 
  = g(x_\lstp^1) = g(x_\rstp^2).
\]
The third equality holds because $x \in E_2$ and hence $w = g(1 -
w)$. The fourth equality holds by hypothesis on $g$, that is, $g(1 -
w) = 1 - g(w)$ for all $w$. The last two equalities follow because $w
= x_l^1 = x_r^2$.
\end{proof}

\begin{lemma}\label{lem:eqtypes} For $\beta \in [0,2]$, the point
$\big(\frac{1}{2}$, $\frac{1}{2}$, $\frac{1}{2}$, $\frac{1}{2}\big)$
is the only equilibrium for the vector field $F$ given by
\eqref{eqn:TheField}. For any $\beta > 2$, the field has three
equilibria,
\begin{equation}\label{eqtypes}
  \big(\textstyle\frac{1}{2}, \frac{1}{2}, \frac{1}{2},
  \frac{1}{2}\big),
  \quad
  (w, 1-w, 1-w, w)
  \quad \mbox{and}\quad
  (1-w, w, w, 1-w),
\end{equation}
where $w \in (0,  \frac{1}{2})$ is uniquely determined by $\beta$. The
equilibrium $\big(\frac{1}{2}$, $\frac{1}{2}$, $\frac{1}{2}$,
$\frac{1}{2}\big)$ is linearly stable for $\beta \in [0, 2)$ and linearly
unstable for $\beta > 2$. The equilibria $(w, 1-w, 1-w, w)$ and $(1-w,
w, w, 1-w)$ are linearly stable for $\beta > 2$.
\end{lemma}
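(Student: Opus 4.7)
The plan is to first apply Lemma~\ref{lem:EquilibriaForm} with $g(w) := \psi(2w-1) = 1/(1+e^{\beta(2w-1)})$: this function is strictly decreasing on $[0,1]$ and satisfies $g(1-w)=1-g(w)$, so every equilibrium of $F$ (equivalently, every fixed point of $\pi$) has the form $x = (w,1-w,1-w,w)$ for some $w \in [0,1]$ satisfying the scalar equation $w = g(1-w)$, that is,
\[
w\bigl(1+e^{\beta(1-2w)}\bigr)=1,
\qquad\text{equivalently}\qquad
\beta(1-2w)=\ln\frac{1-w}{w}.
\]

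Next I would set $u = 1-2w \in (-1,1)$ and rewrite the equation as $h(u)=0$ with $h(u) := \ln\frac{1+u}{1-u} - \beta u$. The function $h$ is odd and smooth, with
\[
h'(u) = \frac{2}{1-u^{2}}-\beta.
\]
For $\beta \in [0,2]$, $h'(u) \ge 2-\beta \ge 0$ and $h'$ vanishes only possibly at $u=0$ when $\beta=2$, so $h$ is strictly increasing away from $0$ and $u=0$ is the unique root; this gives the unique equilibrium $(1/2,1/2,1/2,1/2)$. For $\beta>2$, $h'$ is negative on $(-\sqrt{1-2/\beta},\sqrt{1-2/\beta})$ and positive outside, so by oddness and since $h(u) \to +\infty$ as $u\to 1^-$, there is exactly one root $u^* \in (\sqrt{1-2/\beta},\,1)$ on the positive side, hence exactly three roots $0,\pm u^*$; this yields the three equilibria displayed in~\eqref{eqtypes} with $w = (1-u^*)/2 \in (0,1/2)$.

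For the stability analysis I would reduce to the planar system in the coordinates $(x^1_\lstp, x^2_\lstp)$, whose vector field is $F^i_\lstp(x) = -x^i_\lstp + \psi(2x^j_\lstp-1)$, $j=3-i$. A direct computation using $\psi'(y) = -\beta\psi(y)(1-\psi(y))$ and the equilibrium identities $\psi(2x^j_\lstp-1)=x^i_\lstp$ gives, at any equilibrium of the form $(w,1-w,1-w,w)$,
\[
\JF{x} =
\begin{pmatrix} -1 & -2\beta w(1-w) \\ -2\beta w(1-w) & -1 \end{pmatrix},
\]
whose eigenvalues are $-1 \pm 2\beta w(1-w)$. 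At $w=1/2$ this becomes $-1\pm\beta/2$, yielding linear stability for $\beta<2$ and linear instability for $\beta>2$. For the nonsymmetric roots I would use $w(1-w) = (1-(u^*)^2)/4$ to rewrite the stability condition as $(u^*)^2 > 1-2/\beta$, which is precisely the location of $u^*$ obtained above; hence both $(w,1-w,1-w,w)$ and $(1-w,w,w,1-w)$ are linearly stable when $\beta>2$.

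The only mildly delicate step is pinning down the positive root $u^*$ in the correct interval $(\sqrt{1-2/\beta},1)$, because that same interval simultaneously furnishes uniqueness (via monotonicity of $h$ beyond the critical point) and the strict inequality needed for stability. Everything else is bookkeeping: the reduction via Lemma~\ref{lem:EquilibriaForm}, the explicit form of $\psi'$, and the symmetric $2\times 2$ eigenvalue computation.
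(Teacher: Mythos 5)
Your proof is correct, and it follows the same skeleton as the paper's (reduction to the scalar fixed-point equation via Lemma~\ref{lem:EquilibriaForm}, root counting, then eigenvalues of the linearisation), but the two technical steps are carried out by genuinely different means. For the root count, the paper argues that $w \mapsto g(1-w)$ is increasing and strictly convex on $[0,\tfrac12)$ and compares its slope at $w=\tfrac12$ with $1$; you instead substitute $u=1-2w$ and study the explicit odd function $h(u)=\ln\frac{1+u}{1-u}-\beta u$, whose derivative $\frac{2}{1-u^2}-\beta$ makes the trichotomy at $\beta=2$ immediate. For stability of the asymmetric equilibria, the paper shows $-1+2h(w,\beta)<0$ by proving $w<w_*$, where $w_*$ is the tangency point of $g(1-\,\cdot\,)$ with the diagonal; you get the same conclusion for free from the location $u^*\in(\sqrt{1-2/\beta},\,1)$ already established in the root count, since $2\beta w(1-w)=\tfrac{\beta}{2}\bigl(1-(u^*)^2\bigr)<1$ is exactly $(u^*)^2>1-2/\beta$. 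This unification is the main payoff of your route: one inequality does double duty. Your $2\times 2$ Jacobian is consistent with the paper's $4\times 4$ one (indeed $2\beta w(1-w)=\beta/(1+\cosh(\beta(1-2w)))=2h(w,\beta)$); the only caveat is that the lemma's notion of linear stability is stated for the full Jacobian $\JF{x}$, so to be fully aligned you should add one line noting that the two eigenvalues you discard (those in the directions transverse to $\TsD$) equal $-1$, as in the paper's computation, so they do not affect the stability classification.
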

\begin{proof}
Let $\Eq$ be the set of equilibria of the vector field given by
(\ref{eqn:TheField}), and $\psi$ be given as in
(\ref{eqn:psi}).  
First we show that $\Eq = E_2$, where $E_2$ is defined
as in Lemma \ref{lem:EquilibriaForm} for $g(w) = \psi(2 w -1)$. To
that end, note that $x \in \Eq$ if and only if $x_v^i = \pi_v^i(x) =
\psi(2 x_v^j -1) = g(x_v^j)$ for all $i$ and $v$, where $j = 3
-i$. This shows that $\Eq = E_2$.  Next, we show that $E_1 = E_2$. The
previous equality is ensured by Lemma \ref{lem:EquilibriaForm},
provided that $g$ is strictly decreasing and $g(1-w)=1-g(w)$ for all
$w \in [0,1]$. These two assertions follow immediately by inspection
on $g(w)$, where
\[
   g(w) =  \frac{1}{1+e^{2 \beta w - \beta}}.
\]
This shows that $\Eq = E_2$ with $g(w) = \psi(2w -1)$. In particular,
for all  $\beta \geq 0$, 
\begin{equation}
 \label{eqn:Def_of_E}
 \Eq =
 \Big\{x\in\D\ \Big|\ x = (w, 1-w, 1-w, w), \text{ where } w
 = g(1 - w)\Big\}.
\end{equation}

By Lemma \ref{lem:EquilibriaForm}, it follows that $(\frac{1}{2}$,
$\frac{1}{2}$, $\frac{1}{2}$, $\frac{1}{2}$) $\in \Eq$ and $(w, 1-w,
1-w, w) \in \Eq$ if and only if $(1-w, w, w, 1-w) \in \Eq$. To conclude,
it is sufficient to show two things. First, if $\beta \in [0,2]$, then
there is no $w \in [0,\frac{1}{2})$ such that $w = g(1-w)$; and
second, if $\beta \in (2,\infty)$, then there is only one $w \in
[0,\frac{1}{2})$ such that $w = g(1-w)$.

If $\beta = 0$, then the first assertion holds because $g(1 - w) =
\frac{1}{2}$ for all $w \in [0,\frac{1}{2})$. If $\beta > 0$, then
both assertions hold because $g(1 - w)$ is bounded from below by zero,
increasing, strictly convex on $[0,\frac{1}{2})$, and such that
\[
  \frac{\partial}{\partial w} g(1 - w)\big{|}_{w=\frac{1}{2}} > 1
  \qquad \text{if and only if $\ \ \beta > 2$}.
\]

The stability of an isolated equilibrium point is determined by
studying a linearization of the vector field provided by the Jacobian
matrix at that point. For any $x \in \D$ let $\JF{x} = [\partial
F^i_k(x)/\partial x^j_s]$ for $i=1,2$, $j=1,2$, and $k, s \in
\{\lstp,\rstp\}$.  For $x_* =(\frac{1}{2}, \frac{1}{2}, \frac{1}{2},
\frac{1}{2}) \in \Eq$, the Jacobian matrix is
\[
 \JF{x_*}
 =
 \begin{bmatrix*}[r]
  -1      &    0    & -\frac{\beta}{4}  & \frac{\beta}{4}   \\[.4em]
   0      &   -1    &  \frac{\beta}{4}  & -\frac{\beta}{4}  \\[.4em]
  -\frac{\beta}{4}  &  \frac{\beta}{4}  & -1       &    0   \\[.4em]
   \frac{\beta}{4}  & -\frac{\beta}{4}  &  0       &   -1
 \end{bmatrix*}.
\]
The four eigenvalues of $\JF{x_*}$ are easily computed and equal
\begin{equation}\label{eqn:eingenv}
-1, \quad  -1, \quad  -1 - \frac{\beta}{2}, \quad\text{and }\  -1 +
\frac{\beta}{2}.
\end{equation}
This shows that the equilibrium $x_* =(\frac{1}{2}, \frac{1}{2},
\frac{1}{2}, \frac{1}{2})$ is linearly stable if $\beta < 2$ and
linearly unstable if $\beta > 2$.

Now, suppose that $\beta > 2$, and let $x_w = (w, 1-w, 1-w, w) \in
\Eq$, where $w \in (0,\frac{1}{2})$. The Jacobian of the vector field
at $x_w$ is given in this case by the matrix
\[
 \JF{x_w}
 =
 \begin{bmatrix*}[r]
   -1     &    0    & -h(w,\beta)  &  h(w,\beta)   \\[.4em]
   0      &   -1    &  h(w,\beta)  & -h(w,\beta)   \\[.4em]
  -h(w,\beta)   &  h(w,\beta)   & -1     &    0    \\[.4em]
   h(w,\beta)   & -h(w,\beta)   &  0     &   -1
 \end{bmatrix*},
\]
where
\[
 h(w, \beta) = \frac{\beta}{2+2\cosh(\beta-2 w \beta)}.
\]
Two eigenvalues of this matrix equal $-1$. The two other eigenvalues
are $-1 \mp 2 h(w,\beta)$. Simple analysis shows that the eigenvalue
$-1-2h(w,\beta)$ is negative for any $\beta > 2$ and $w \in
\big[0,\frac{1}{2}\big)$.  To conclude that $x_w$ is stable, it remains
to show that $-1 + 2h(w,\beta)$ is negative.  Note that, for $\beta >
2$ and $v \in \big[0, \frac{1}{2}\big)$, the map $v\mapsto
-1+2h(v,\beta)$ is increasing and equals 0 at a single value $w_*$
determined by
\[
  w_* = \frac{\beta - \text{arcosh}(\beta-1)}{2\beta}.
\]

To conclude that $-1+2h(w,\beta)$ is negative, we show that $w < 
w_*$. A straightforward computation shows that $w_*$ is the unique
solution to
\[
  \frac{\partial}{\partial w_*} g(1-w_*) = 1 \quad\text{for}\  w_* \in
  \Big[0,\frac{1}{2}\Big).
\]
Since $x_w = (w, 1-w, 1-w, w) \in \Eq$ and $w \in
\big[0,\frac{1}{2}\big)$, we have that $g(1-w) = w$, where $g$ is the
map used in the definition of $\Eq$ in (\ref{eqn:Def_of_E}). Since
$g\big(\frac{1}{2}\big) = \frac{1}{2}$, $w \mapsto g(1- w)$ is
continuous, strictly increasing and strictly convex for $w \in \big[0,
\frac{1}{2}\big]$, it follows that $w < w_*$.

An analogous argument shows that the equilibrium $(1-w, w, w, 1-w)$ is
stable when $\beta > 2$, because the Jacobian of the vector field at
this point has the same spectrum as the Jacobian at $(w, 1-w, 1-w,
w)$.
\end{proof}

\subsection{Convergence to equilibria} 
\label{sec:ProofTh3}
This section presents the proof of Theorem~\ref{th:X_as_convergence}.
Its proof relies on Lemma~\ref{lem:LimitSet-second-part} stated
bellow. This lemma allows us to relate the limiting behaviour of the
random process $X$ to the one of the flow induced by the vector field
in \eqref{eqn:TheField}. We will make use of the following
terminology, mostly taken form \cite{B99}, to state this result.

A semi-flow on $\D$ is a continuous map $\phi:\R_+\times\D\to\D$ such
that $\phi_0$ is the identity on $\D$, and $\phi_{t+s} =
\phi_t\circ\phi_s$ for any $t, s \geq 0$. To simplify notation we used
$\phi_t(x)$ instead of $\phi(t, x)$. A subset $A\subset \D$ is said to
be positively invariant if $\phi_t(A)\subset A$ for all $t\geq 0$. Let
$F$ be a continuous Lipschitz vector field on $\D$. The semi-flow
induced by $F$ is the unique smooth map $\Phi=\{\phi_t\}$ such that:
1. $\phi_0(x_0) = x_0$ for any $x_0 \in \D$, and 2.
$\frac{d}{dt}\phi_t(x_0) = F(\phi_t(x_0))$ for all $t \geq 0$.

A simple verification shows that vector field $F$ in
(\ref{eqn:TheField}) is Lipschitz continuous, hence the induced
semi-flow $\Phi$ is uniquely determined by $F$. Moreover, the
following lemma shows that $\D$ is positively invariant by $\Phi$.

\begin{lemma}\label{lem:DomainInvariance}
$\D$ is positively invariant for the semi-flow $\Phi$ induced by the
vector field $F$ in (\ref{eqn:TheField}).
\end{lemma}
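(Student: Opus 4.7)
The plan is to show that the vector field $F$ is \emph{inward-pointing (or tangent) at every boundary point of $\D$}, and then derive the invariance of a trajectory by an elementary Gr\"onwall-type comparison on each coordinate. The boundary of $\D$ has two types of pieces: the affine constraints $x^i_\lstp + x^i_\rstp = 1$ that cut out the product of simplices inside $\R^4$, and the non-negativity constraints $x^i_v\geq 0$. I would check tangency on the first and inward-pointing on the second, and then integrate.

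For the tangency step, I would verify $F^i_\lstp(x) + F^i_\rstp(x) = 0$ for every $x\in\D$ and $i=1,2$. Since $F^i_v(x) = -x^i_v + \pi^i_v(x)$ and $x^i_\lstp + x^i_\rstp = 1$, this reduces to $\pi^i_\lstp(x) + \pi^i_\rstp(x) = 1$. Using $x^j\in\triangle$ we have $2x^j_\rstp - 1 = -(2x^j_\lstp - 1)$, so the symmetry $\psi(-y) = 1-\psi(y)$ noted below equation (\ref{eqn:psi}) gives $\pi^i_\rstp(x) = \psi(-(2x^j_\lstp - 1)) = 1 - \psi(2x^j_\lstp - 1) = 1 - \pi^i_\lstp(x)$, as required. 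This also confirms the earlier statement that $F$ takes values in $\TsD$.

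For the inward-pointing step at a face $\{x^i_v = 0\}$, I would simply observe that $\pi^i_v(x) = \psi(2x^j_v - 1)$ takes values in $(0,1)$ since $\psi$ does, so whenever $x^i_v = 0$ one has $F^i_v(x) = \pi^i_v(x) > 0$. More generally, for any $x\in\D$ and any coordinate, $F^i_v(x) \geq -x^i_v$.

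I would then integrate these two pointwise facts along a trajectory. Fix $x_0\in\D$ and let $\phi_t(x_0)$ denote the induced semi-flow, which exists by the Lipschitz property of $F$ remarked just above. For each fixed $i,v$, the scalar function $y(t) := \big(\phi_t(x_0)\big)^i_v$ satisfies $\dot y(t) = F^i_v(\phi_t(x_0)) \geq -y(t)$, hence by Gr\"onwall $y(t) \geq y(0)e^{-t} \geq 0$. Meanwhile, tangency yields $\frac{d}{dt}\big((\phi_t(x_0))^i_\lstp + (\phi_t(x_0))^i_\rstp\big) = F^i_\lstp + F^i_\rstp = 0$, so this sum stays at its initial value $1$. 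Together the coordinate inequalities and the sum identity give $\phi_t(x_0)\in\D$ for all $t\geq 0$, which is the desired positive invariance. There is no genuine obstacle; the only thing to be careful about is that the ODE is \emph{a priori} defined on $\R^4$ (via the smooth extension of $\pi$) so that $\phi_t(x_0)$ makes sense before we know it stays in $\D$, and this is ensured by the Lipschitz continuity of $F$ stated in the preceding paragraph of the paper.
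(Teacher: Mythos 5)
Your proof is correct and follows essentially the same route as the paper: both arguments verify that $F$ is tangent to the affine constraints (since $F$ maps into $\TsD$) and points strictly inward on the faces $\{x^i_v=0\}$ because $\pi^i_v=\psi(2x^j_v-1)>0$. Your Gr\"onwall integration simply makes explicit the final implication (boundary condition $\Rightarrow$ positive invariance) that the paper asserts without detail, and your caveat about first extending the field off $\D$ is a sensible precaution rather than a divergence in method.
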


\begin{proof} Let $z = (z_\lstp^1, z_\rstp^1, z_\lstp^2, z_\rstp^2)$
be a generic point in $\D$. Suppose that $z \in \partial\D$ and hence,
without loss of generality, that $z_\lstp^1 = 0$ and $z_\rstp^1 =
1$. Suppose $\phi_t$ is a solution of (\ref{eqn:ODE}) with $\phi_0 =
z$. For any $t \geq 0$, write $\phi(t, z) = \phi_t(z)$. Since $F(z)
\in \TsD$, it is sufficient to show that $\frac{d }{dt}\phi_\lstp^1(t,
z)\big|_{t=0} > 0$, in which case, it holds also that $\frac{d }{dt}
\phi_\rstp^1(t, z)\big|_{t=0} = - \frac{d }{dt} \phi_\lstp^1(t, z)\big|_{t=0}
< 0$. By (\ref{eqn:pifirst}), (\ref{eqn:pipsi-novo}), and
(\ref{eqn:ODE}), it follows that
\[
  \frac{d }{dt} \phi_\lstp^1(t, z)|_{t=0} =  \psi(2 z_\lstp^2 - 1) \geq
  \inf_y \psi(2 y - 1) 
  =
  \frac{1}{1+e^{\beta}} > 0.
\]
This shows that $F(z)$ points inwards whenever $z \in \partial\D$,
and hence that $\phi_t \in \D$ for all $t>0$ if $\phi_0 \in \D$.
\end{proof}

In terms of the semi-flow $\Phi$, a point $x\in \D$ is said to be an
equilibrium if $\phi_t(x) = x$ for all $t \geq 0$. A point $x \in \D$
is periodic if $\phi_T(x) = x$ for some $T>0$. The set $\gamma(x) =
\{\phi_t(x)\, :\, t \geq 0\}$ is the orbit of $x$ by $\Phi$. A subset
$\Gamma \subset \D$ is a orbit chain for $\Phi$ provided that for some
natural number $k\geq 2$, $\Gamma$ can be expressed as the union
$\Gamma = \{e_1, \ldots, e_k\}\bigcup \gamma_1 \bigcup \ldots \bigcup
\gamma_{k-1}$ of equilibria $\{e_1, \ldots, e_k\}$ and nonsingular
orbits $\gamma_1$, $\ldots$, $\gamma_{k-1}$ connecting them. If $e_1 =
e_k$, $\Gamma$ is called a cyclic orbit chain.

Let $\delta >0$, $T>0$. A $(\delta, T)$-pseudo orbit from $x \in \D$
to $y \in \D$ is a finite sequence of partial orbits $\{\phi_t(y_i) :
0 \leq t \leq t_i\}$; $i=0, \ldots, k-1$; $t_i\geq T$ of the semi-flow
$\Phi= \{\phi_t\}_{t\geq 0}$ such that
\[
  \|y_0 - x\| < \delta,
  \qquad
  \|\phi_{t_i}(y_i) -  y_{i+1}\| < \delta,\ \ i=0, \ldots, k-1,
  \quad\text{and}\quad
  y_k = y.
\]
A point $x \in \D$ is chain-recurrent if for every $\delta>0$
and $T>0$ there is a $(\delta, T)$-pseudo orbit from $x$ to
itself. The set of chain-recurrent points of $\Phi$ is denoted by
$\CR$. The set $\CR$ is closed, positively invariant by $\Phi$ and
such that $\Eq \subset \CR$.

Let $\sL\big(\{X(n)\}\big)$ be the limit set of the stochastic
approximation process $X = \{ X(n)\}_{n\geq 0}$. That is, for any point
$\omega \in \Omega$, the value of $\sL\big(\{X(n)\}\big)$ at $\omega$
is given by the set of points $x \in \R^{md}$ for which $\lim_{k\to
\infty} X(n_k, \omega) = x$, for some strictly increasing sequence of
integers $\{n_k\}_{k \in \N}$. 

Next we show that $\sL\big(\{X(n)\}\big)$ is almost surely connected
and included in $\CR$. This is the content of
Lemma~\ref{lem:LimitSet-second-part}. To show
Lemma~\ref{lem:LimitSet-second-part}, we use the following lemma. 

\begin{lemma}\label{lem:Kushner} 
Let $X = \{X(n)\}_{n\ge 0}$ be a process satisfying the recursion in
\eqref{eqn:SA} such that $F$ defined by \eqref{eqn:TheField} is a
continuous vector field with unique integral curves. Then
\begin{enumerate}[(i), nosep]
\item $\{X(n)\}_{n\geq 0}$ is bounded, 
\item $\lim_{n\to\infty}\gamma_n=0$, $\sum_{n\geq 0}
    \gamma_n = \infty$, 
\item\label{as:KushnerLemma} for each $T>0$, almost surely it holds
that 
\[
  \lim_{n\to\infty}
  \Bigg(\sup_{\{\, r\, :\, 0\, \leq\, \tau_r - \tau_n\, \leq\, T\, \}}
  \Bigg\|\sum_{k=n}^{r-1} \gamma_k U_k\Bigg\|
  \Bigg) = 0,
\]
where $\tau_0=0$ and $\tau_n = \sum_{k=0}^{n-1} \gamma_k$.
\end{enumerate}
\end{lemma}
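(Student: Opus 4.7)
My plan is to prove the three parts separately. Parts (i) and (ii) are direct verifications from the definitions, while (iii) is the genuine martingale estimate where the work is done.

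For (i) I will note that by construction each coordinate of $X(n)$ is an average of $\{0,1\}$-valued indicators, so it lies in $[0,1]$, and since $S^i$ always moves by exactly $\pm 1$ we have $\xi^i_\lstp(k) + \xi^i_\rstp(k) = 1$ for every $k$, which after averaging gives $X^i_\lstp(n) + X^i_\rstp(n) = 1$. Hence $X(n) \in \D = \triangle\times\triangle$ for all $n$, and $\D$ is a compact subset of $\R^4$. Part (ii) is immediate from $\gamma_n = 1/(n+1)$: indeed $\gamma_n \to 0$ while $\sum_{n\geq 0}\gamma_n$ is the harmonic series, which diverges.

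For (iii) my plan is to apply the $L^2$ martingale convergence theorem to the $\R^4$-valued partial sums $M_n := \sum_{k=0}^{n-1}\gamma_k U_k$. By definition $U_n = \xi(n) - \E[\xi(n)\mid\sF_n]$ is centred given $\sF_n$, so $(U_n)$ forms an $\sF_{n+1}$-adapted martingale difference sequence; and because $\xi(n)\in\{0,1\}^4$, each coordinate of $U_n$ lies in $[-1,1]$, giving a uniform bound $\|U_n\|\leq C$ for an absolute constant $C$. Consequently $(M_n)$ is a square-integrable martingale with
\[
  \E\|M_n\|^2 \;\leq\; C^2 \sum_{k=0}^{n-1}\gamma_k^2 \;\leq\; C^2\sum_{k\geq 0}\frac{1}{(k+1)^2} \;<\; \infty,
\]
uniformly in $n$, so $M_n$ converges almost surely to a finite limit $M_\infty$.

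Once almost sure convergence of $M_n$ is in hand, the Cauchy property yields $\sup_{r\geq n}\|M_r - M_n\| \to 0$ almost surely as $n\to\infty$. Since for each admissible $r$ the inner sum in (iii) equals $M_r - M_n$, and the supremum in (iii) is taken over a subset of $\{r\geq n\}$, the claim follows at once. I do not anticipate any genuine obstacle; the point worth emphasising is that the combination of $\sum_n\gamma_n^2 < \infty$ with the boundedness of $U_n$ is already strong enough to kill the entire noise sum, so the $(\delta,T)$ timescale appearing in the general Kushner--Bena\"im framework plays no active role in our setting.
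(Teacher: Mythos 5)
Your proof is correct and takes essentially the same route as the paper: parts (i) and (ii) are immediate from the definitions, and for (iii) both arguments observe that $M_n=\sum_{k<n}\gamma_k U_k$ is a martingale with bounded noise terms and $\sum_k\gamma_k^2<\infty$, hence converges almost surely, so the Cauchy property dominates the supremum in question. The only cosmetic difference is that you invoke the $L^2$-bounded martingale convergence theorem, whereas the paper bounds the predictable quadratic variation via Doob's decomposition and cites the corresponding almost-sure convergence theorem; both hinge on the identical estimate.
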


\begin{proof}
Item (\emph{i}) follows by definition of $\{X(n)\}_{n\geq 0}$ in
\eqref{eqn:xxis}.  Item (\emph{ii}) is immediate by the form of
$\gamma_n$ in (\ref{eqn:gamma_and_U}). The proof of (\emph{iii}) is
presented in the Appendix.
\end{proof}

\begin{lemma}\label{lem:LimitSet-second-part} 
Let $X = \{X(n)\}_{n\ge 0}$ be a process satisfying the recursion in
\eqref{eqn:SA} and $\CR$, the chain-recurrent set of the semi-flow
induced by the vector field $F$ in \eqref{eqn:TheField}. Then,
$\mathfrak L\big(\{X(n)\}\big)$ is almost surely connected and
included in $\CR$.
\end{lemma}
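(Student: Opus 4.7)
The plan is to realise $X$ as a (bounded) asymptotic pseudo-trajectory of the semi-flow $\Phi$ and then invoke Benaïm's theorem from \cite{B96,B99}, which says that the limit set of any such bounded trajectory is internally chain transitive, in particular connected and contained in $\CR$. This is the standard route for stochastic approximations of the form \eqref{eqn:SA}, and the three items of Lemma~\ref{lem:Kushner} are precisely the hypotheses it needs.

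Concretely, I would introduce the continuous-time interpolation $\bar X : \R_+ \to \D$ obtained by setting $\bar X(\tau_n) = X(n)$ and interpolating linearly on each interval $[\tau_n, \tau_{n+1}]$, where $\tau_n = \sum_{k=0}^{n-1}\gamma_k$. Summing the recursion \eqref{eqn:SA}, for any $s \in [\tau_n, \tau_{n+1}]$ one can write
\[
  \bar X(s) = X(n) + \int_{\tau_n}^{s} F(\bar X(u))\, du + \varepsilon_n(s),
\]
where $\varepsilon_n(s)$ collects (a) the discrepancy between $F(\bar X(u))$ and the piecewise-constant value $F(X(n))$ used in \eqref{eqn:SA}, and (b) the partial sums $\sum_{k=n}^{r-1} \gamma_k U_k$ of the martingale-difference noise. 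By item (\emph{i}) of Lemma~\ref{lem:Kushner} the trajectory lies in the compact set $\D$, and on that compact set $F$ is Lipschitz (Lemma~\ref{lem:DomainInvariance} and the smoothness of $\psi$); combined with item (\emph{ii}) this controls contribution (a), while item (\emph{iii}) disposes of (b) uniformly on each window of length $T$.

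With this in hand, a standard Grönwall argument applied to the difference $\bar X(\tau_n + s) - \phi_s(\bar X(\tau_n))$ gives, for every fixed $T > 0$,
\[
  \lim_{n \to \infty}\ \sup_{0 \le s \le T}\ \bigl\|\bar X(\tau_n + s) - \phi_s(\bar X(\tau_n))\bigr\| = 0 \quad \text{a.s.},
\]
which is exactly the definition of an asymptotic pseudo-trajectory of $\Phi$. The conclusions of Lemma~\ref{lem:LimitSet-second-part} then follow from the general theorem of Benaïm (see Theorem~5.7 and Proposition~5.3 in \cite{B99}): the limit set of a bounded asymptotic pseudo-trajectory is a compact, connected, $\Phi$-invariant set, and every internally chain transitive set is contained in $\CR$. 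Note that $\sL(\{X(n)\})$ coincides with the $\omega$-limit set of $\bar X$ up to the null event where items (\emph{i})--(\emph{iii}) of Lemma~\ref{lem:Kushner} fail.

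The main technical obstacle is the Grönwall estimate: one must handle the error $\varepsilon_n(s)$ uniformly over $s \in [\tau_n, \tau_n + T]$ by a partition into at most $O(T/\gamma_n)$ subintervals and bound each using both the Lipschitz constant of $F$ and the noise control from Lemma~\ref{lem:Kushner}\ref{as:KushnerLemma}. Once this uniform estimate is in place, the remaining work—connectedness of $\sL(\{X(n)\})$ and its inclusion in $\CR$—is purely topological and is entirely borrowed from \cite{B99}.
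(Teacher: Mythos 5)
Your proposal is correct and follows essentially the same route as the paper: the paper verifies conditions (\emph{i})--(\emph{iii}) of Lemma~\ref{lem:Kushner} and then directly cites Theorem 1.2 of \cite{B96}, whereas you additionally sketch the interior of that citation (the interpolated process, the asymptotic pseudo-trajectory property, and the chain-transitivity of its limit set). The extra detail is accurate but not a different argument.
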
	

\begin{proof} Since $X$ satisfies the properties
(\emph{i})-(\emph{iii}) in Lemma~\ref{lem:Kushner}, the proof of the
lemma follows from Theorem 1.2 in \cite{B96}.
\end{proof}

We are now in the position to present the proof of
Theorem~\ref{th:X_as_convergence}.

\begin{proof}[Proof of Theorem~\ref{th:X_as_convergence}]
We show first that $\CR \subset \Eq$. Let $\Phi=\{\phi_t\}_{t\geq 0}$
denote the planar semi-flow induced by the vector field $F\equiv
(F^1_\lstp, F^2_\lstp)$, where $F^1_\lstp$ and $F^2_\lstp$ are two of
the coordinate functions of the field defined by
\eqref{eqn:TheField}. By Lemma~\ref{lem:eqtypes}, we have that the
field $F$ has isolated equilibria. It then follows from Theorem 6.12
in \cite{B99} that for any point $p \in \CR$ one of the following
holds:
\begin{enumerate}[($a$), nosep]
\item $p$ is an equilibrium
\item $p$ is periodic
\item There exists a cyclic orbit chain $\Gamma \subset \CR$ which
contains $p$.
\end{enumerate} A simple computation shows that div$F$, the divergence
of $F$, is negative, indeed
$
   \text{div} F(x) = \partial F_\lstp^1(x)/\partial x_\lstp^1
   +  \partial F_\lstp^2(x)/\partial x_\lstp^2 = - 2.
$
This implies that $\phi_t$ decreases area for $t > 0$. In this case,
according to Theorem 6.15 in \cite{B99}, it follows that:
\begin{enumerate}[1., nosep]
\item $\CR$ is a connected set of equilibria which
  is nowhere dense and which does not separate the plane
\item If $\Phi$ has at most countably many equilibrium points, then 
  $\CR$ consists of a single stationary point.
\end{enumerate}
Both options, ($b$) and ($c$), are therefore ruled out and hence $\CR 
\subset \Eq$.

Observe now that, by Lemma~\ref{lem:LimitSet-second-part}, we have
that $\mathfrak L\big(\{X(n)\}\big)$ is almost surely connected and
included in $\CR$. Since $\CR \subset \Eq$ and $\Eq$ is formed by
isolated points it follows that $X(n)$ converges almost surely towards
a point of $\Eq$.
\end{proof}

\subsection{Non-convergence to the unstable equilibrium}
\label{sec:NonConvrg}
A step to characterise the asymptotic behaviour of the stochastic
approximation $X$ consists in establishing that this process does not
converges toward linearly unstable equilibria of $F$. This is
accomplished here by using Theorem 1 in~\cite{P90}, evoqued in the
proof of the following lemma.

\begin{lemma}\label{non-covergence-int} Let $X=\{X(n)\}_{n\ge 0}$ be a
process satisfying the recursion in (\ref{eqn:SA}). Then, if $\beta
> 2$,
\[
  \Pb\Big(\lim_{n\to\infty} X(n)
  =
  \big(\textstyle\frac{1}{2}, \frac{1}{2}, \frac{1}{2},
  \frac{1}{2}\big)\Big)
  = 0.
\]	
\end{lemma}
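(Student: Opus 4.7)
The plan is to apply Pemantle's non-convergence theorem for stochastic approximations (Theorem~1 in \cite{P90}), which rules out convergence to linearly unstable equilibria provided the noise is non-degenerate in an unstable direction. By Lemma~\ref{lem:X_is_SA}, $X$ has the required stochastic approximation form with $\gamma_n = 1/(n+1)$ satisfying $\sum\gamma_n = \infty$ and $\sum\gamma_n^2 < \infty$. The vector field $F$ is analytic (hence $C^2$) on $\D$, and $U_n$ is uniformly bounded (each coordinate lies in $[-1,1]$), so it is a bounded $\sF_n$-martingale difference sequence.

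First I would locate an unstable direction. By Lemma~\ref{lem:eqtypes}, when $\beta > 2$ the Jacobian $\JF{x_*}$ at $x_* = (\frac12,\frac12,\frac12,\frac12)$ has an eigenvalue $-1 + \beta/2 > 0$. Let $e$ be a unit eigenvector associated to this positive eigenvalue (in the planar reduction onto the $(x^1_\lstp, x^2_\lstp)$-coordinates used in Section~\ref{sec:ProofTh3}). Thus $x_*$ is a linearly unstable equilibrium of the planar dynamics.

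Next I would verify the noise non-degeneracy along $e$. Writing $\xi(n) = (\xi^1_\lstp(n), \xi^1_\rstp(n), \xi^2_\lstp(n), \xi^2_\rstp(n))$, the two pairs $(\xi^1_\lstp(n), \xi^1_\rstp(n))$ and $(\xi^2_\lstp(n), \xi^2_\rstp(n))$ are, given $\sF_n$, conditionally independent with Bernoulli-type laws parametrised by $\pi^i_\lstp(X(n))$. Since $\pi$ is continuous and $\pi^i_v(x_*) = \frac12$, there is a neighbourhood $\mathcal N$ of $x_*$ on which each $\pi^i_v(x) \in [\frac12 - \varepsilon, \frac12 + \varepsilon]$; the conditional covariance of the (reduced, two-dimensional) noise is then uniformly positive definite on $\mathcal N$. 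Consequently, the one-dimensional projection $\langle U_n, e\rangle$ has conditional variance bounded below by a constant $c > 0$ whenever $X(n) \in \mathcal N$, and since $\langle U_n, e\rangle$ takes at most four values, this yields
\[
  \E\bigl[\langle U_n, e\rangle^+ \,\big|\, \sF_n\bigr] \geq c',
  \qquad
  \E\bigl[\langle U_n, e\rangle^- \,\big|\, \sF_n\bigr] \geq c',
\]
on $\{X(n) \in \mathcal N\}$, for some $c' > 0$. This is exactly the non-degeneracy hypothesis of Pemantle's theorem.

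With all hypotheses checked, Theorem~1 of \cite{P90} applies to the planar reduction of $X$ and yields $\Pb(\lim_{n\to\infty} X(n) = x_*) = 0$. The main potential obstacle is the noise condition, but it is routine once one observes that the two Bernoulli increments are conditionally independent with parameters staying uniformly away from $0$ and $1$ in a neighbourhood of $x_*$.
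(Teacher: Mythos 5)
Your proposal follows the same route as the paper: both prove the lemma by invoking Theorem~1 of \cite{P90} at $x_*=(\frac12,\frac12,\frac12,\frac12)$, which is linearly unstable by Lemma~\ref{lem:eqtypes} because $\JF{x_*}$ has the eigenvalue $-1+\beta/2>0$, and both use that $U_n$ is a bounded martingale-difference sequence. The only substantive difference is how the noise non-degeneracy hypothesis is verified: the paper checks, for \emph{every} unit vector $\theta$ in the tangent space $\TsD$, that $\E[\langle\theta,U_n\rangle^+\mid X(n)=x,\sF_n]\geq s(x)$ with the explicit function $s(x)=\frac12\big(\min_{i,v}\pi^i_v(x)\big)^3$, by conditioning on the event that each walk steps toward its $\theta$-maximal vertex; you instead use the conditional independence of the two walks' increments, so that the reduced conditional covariance is $\mathrm{diag}\big(\pi^i_\lstp(x)(1-\pi^i_\lstp(x))\big)$, uniformly positive definite near $x_*$ (indeed on all of $\D$, since $\psi$ is bounded away from $0$ and $1$), and deduce the lower bound from the variance. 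One caution: Theorem~1 of \cite{P90} requires the bound $\E[\langle\theta,U_n\rangle^+\mid\sF_n]\geq c$ for \emph{all} unit vectors $\theta$ of the relevant two-dimensional space, not only for the unstable eigendirection $e$; your write-up states it only for $e$. This is not a real gap, because your positive-definiteness observation, combined with $\E[\langle\theta,U_n\rangle\mid\sF_n]=0$ and the uniform bound on $|\langle\theta,U_n\rangle|$, gives $\E[\langle\theta,U_n\rangle^+\mid\sF_n]=\frac12\E[|\langle\theta,U_n\rangle|\mid\sF_n]\geq \Var\big(\langle\theta,U_n\rangle\mid\sF_n\big)\big/\big(2\sup|\langle\theta,U_n\rangle|\big)\geq c'$ uniformly over unit $\theta$ --- but you should state the hypothesis in that generality before citing the theorem.
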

\begin{proof} Let $x_*=\big(\frac12, \frac12, \frac12, \frac12\big)$
and $U_n$ be defined as in Lemma~\ref{lem:X_is_SA}. Throughout,
$\|\cdot\|$ stands for the $L^1$ norm in $\R^4$. The proof follows
from Theorem 1 in \cite{P90}, provided that the following conditions
are satisfied:
\begin{enumerate}[(i), nosep]
\item $x_*$ is a linearly unstable critical point of $F$,
\item $\|U_n\| \leq c_1$ for some posite constant $c_1$, and
\item For every $x \in \mathcal B(x_*)$, $n > n_0$, and $\theta
\in \TsD$ with $\|\theta\|=1$, there is a postitive constant $c_2$
such that 
\[
  \E\Big[\textup{max}\big\{\big\langle\theta, U_n\big\rangle,0\big\}
  \,\Big|\, X(n) = x, \sF_n\Big] \geq c_2.
\]
\end{enumerate}

Condition (\emph{i}) follows immediately from Lemma~\ref{lem:eqtypes}
and Condition (\emph{ii}), by the definition of $U_n$ in
(\ref{eqn:gamma_and_U}). The rest of the proof concerns the
verification of (\emph{iii}).

Let $\TsD_1 = \{ \theta \in \TsD \, : \, \|\theta\| = 1 \}$ and $n_0$
be defined as in the first paragraph of the introduction. For $w \in
\R$, let $w^+ = \textup{max}\{w,0\}$. It is sufficient to show that,
for all $n > n_0$, $x \in \D$, and $\theta \in \TsD_1$, we have that
\begin{equation}\label{Edesig}
   \E \Big[ \big\langle\theta, U_n 
   \big\rangle^+ \Big{|} \,X(n) = x, \, \sF_n \Big] \geq s(x) 
\end{equation}
where $s:\D \to \mathbb{R}$ is a continuous function with $s(x_*) >
0$.

Let 
\begin{equation}
s(x) = \frac{1}{2} \Big(\min_{i,v} \pi_{v}^{i} \big(x\big)
\Big)^{3}.
\end{equation}
Clearly $s$ is continuous because $\pi_v^i$ are continuous. Since
$F(x) = -x + \pi(x)$ and since $F(x_*) = 0$, we have that $\pi(x_*) =
x_* = \big(\frac12, \frac12, \frac12, \frac12\big)$ and, therefore,
$s(x_*) > 0$.

It remains to show (\ref{Edesig}).  Let $\theta \in \TsD_1$. For each
walk $i \in \{1,2\}$, choose a vertex $v^i \in \{\lstp, \rstp\}$, such
that
\[
  \theta_{v^i}^i  =  \max_v \theta_v^i.
\]
Next, define the event $A = \bigcap_{i = 1, 2} \{\xi_{v^i}^i(n) =
1\}$, with $\xi$ as defined by (\ref{eqn:xis}). That is, $A$ is the
event in which walk $i \in \{1,2\}$ makes a transition to vertex $v^i$
 at time $n+1$. For all
$n \geq n_0$ and $\theta \in \TsD_1$, we have that
\begin{equation}\label{Edesig33}
\E\Big[\big\langle\theta, U_n\big\rangle^+  \Big  | \, X(n) = x, \sF_n
\Big ] 
=
\E\Big[\big\langle\theta, U_n\big\rangle^+  \Big  | \, X(n) = x \Big
] 
\geq
q(x, \theta)
\end{equation}
where
\begin{equation}
 \label{eqn:def_of_q}
  q(x, \theta)
 =
  \E\Big[\big\langle\theta, U_n\big\rangle^+  \Big  | \, A,\, X(n) = x 
  \Big]\Pb\big(A\, | \, X(n) = x\big).    
\end{equation}
Note that the first equality
follows because the distribution of $U_n$ is uniquely
determined by $X(n)$ according to (\ref{eqn:gamma_and_U}).  The
inequality in (\ref{Edesig33}) holds because $\langle\theta,
U_n\rangle^+$ is non-negative. Now, to show (\ref{Edesig}), it is
sufficient to prove that for all $\theta \in \TsD_1$ and $x \in \D$
\begin{equation}\label{inetheta}
    q(x, \theta) \geq s(x).
\end{equation}
Assume without loss of generality, that $v^i = \lstp$, $i = 1,
2$. That is, $\theta \in \TsD_1$ is of the form $(\theta_{\lstp}^1,
\theta_{\rstp}^1, \theta_{\lstp}^2, \theta_{\rstp}^2) = \big(a, -a,
\frac12 - a, a -\frac12\big)$ for some $a \in \big[0,
\frac12\big]$. In that case, $A = \{\xi_{\lstp}^1(n) = 1, \,
\xi_{\rstp}^1(n) = 0, \,\xi_{\lstp}^2(n) = 1, \, \xi_{\rstp}^2(n) =
0\}$. According to (\ref{eqn:gamma_and_U}), we have $(U_n)_v^i =
\xi_v^i(n) - \E[\xi_v^i(n)\mid\sF_n] = \xi_v^i(n) -
\pi_v^i(X(n))$. Using the previous equality and the particular
form of $\theta$ and $A$, it follows by the definition of $q$ in
\eqref{eqn:def_of_q} that
\begin{align*}
 q(\theta, x)
 %
 %
 %
 &=
   \Big[\sum_{i} \theta_{\lstp}^i - \sum_{i,v} \theta_v^i
   \pi_v^i(x)\Big ]^+  \Pb\big(A\, | \, X(n) = x \big) \\
  &=
    \Big[\frac{1}{2} - \sum_{i,v} \theta_v^i  \pi_v^i(x)\Big]^+
    \prod_{i=1}^2\pi_{\lstp}^{i}(x) \\
  &\geq
    \Big[\frac{1}{2} - \sum_{i,v} \theta_v^i  \pi_v^i(x)\Big ]^+
    \Big(\min_{i,v}\pi_v^{i}(x) \Big)^2,
\end{align*}
where the last equality uses the fact that the transitions of the
walks are independent given the event $\{X(n) = x\}$ and, therefore,
$\Pb\big(A\, | \, X(n) = x \big) = \prod_{i=1}^2 \pi_{\lstp}^{i}
(x)$.

To show (\ref{inetheta}), it is sufficient to show that $(\frac{1}{2}
- \sum_{i,v} \theta_v^i \pi_v^i(x))^+ \geq \frac{1}{2} \min_{i,v}
\pi_v^{i}(x)$. To simplify notation, set $\pi_v^{i} =
\pi_v^{i}(x)$. Since $(\theta_{\lstp}^1, \theta_{\rstp}^1,
\theta_{\lstp}^2, \theta_{\rstp}^2) = \big(a, -a, \frac12 - a, a
-\frac12\big)$, it
follows that
\begin{align*}
  \frac{1}{2} - \sum_{i,v} \theta_v^i \pi_v^i
  &= \frac{1}{2} - \Big(a
    \pi_{\lstp}^1 + \Big(\frac{1}{2} - a\Big)\pi_{\lstp}^2\Big) + a
    \pi_{\rstp}^1 + \Big(\frac{1}{2} - a\Big)\pi_{\rstp}^2 \\
  &\geq \frac{1}{2} -
    \Big(a \pi_{\lstp}^1 + \Big(\frac{1}{2} - a\Big)\pi_{\lstp}^2\Big)
    + a \min_{i,v}\pi_v^i + \Big(\frac{1}{2} - a\Big)
    \min_{i,v}\pi_v^i \\
  &\geq
    \frac{1}{2} \min_{i,v}\pi_v^i,
\end{align*}
where the last inequality uses the fact that $a \in \big[0, \frac12\big]$,
$\pi_\lstp^1, \pi_\lstp^2, \in [0, 1]$, and, therefore, $\frac{1}{2} -
\big(a \pi_{\lstp}^1 + (\frac{1}{2} - a)\pi_{\lstp}^2 \big) \geq
\frac{1}{2} - \big(a + (\frac{1}{2} - a) \big) = 0$.
\end{proof}

\section{Proof of Theorems~\ref{th:trans} and
  \ref{th:rec}}\label{sec:recurrence}
This section presents the proof of the transience of both walks
$S^i_n$, $i=1, 2$, when $\beta \in (2,\infty)$, and the recurrence
when $\beta \in [0,1]$. The problem that arises when $\beta \in (1,2)$
is mentioned in Remark~\ref{remark:speed} at the end of this section.

The transience will make use of the following lemma.

\begin{lemma}\label{th:differenceConv} There is a unique point $x 
  \in [0,1]$, depending on $\beta$, such that,
\[
  \lim_{n\to\infty}\frac{1}{n} \Big(S_n^1-S_{n_0}^1,\, 
  S_n^2-S_{n_0}^2\Big)\ \in\  \Big\{ (x, -x),\, (-x, x)\Big\}
  \qquad
\textup{a.s.}
\]
In addition, if\ \  $0 \leq \beta \leq 2$, then $x = 0$, and if $\beta >
2$, then $0 < x < 1$.
\end{lemma}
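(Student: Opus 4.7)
The plan is to reduce the claim to results already in place, namely Theorem~\ref{th:X_as_convergence}, Lemma~\ref{lem:eqtypes}, and Lemma~\ref{non-covergence-int}. The bridge is a simple algebraic identity linking the displacement of each walk to its empirical right/left proportions. Telescoping gives $S_n^i - S_{n_0}^i = \sum_{k=n_0}^{n-1}\bigl(\xi^i_\rstp(k) - \xi^i_\lstp(k)\bigr)$, so that
\[
  \tfrac{1}{n}\bigl(S_n^i - S_{n_0}^i\bigr)
  = X^i_\rstp(n) - X^i_\lstp(n) + O(1/n)
  = 2 X^i_\rstp(n) - 1 + O(1/n),
\]
where the first equality uses that the omitted initial segment has fixed length $n_0$ and the second uses $X^i_\lstp(n) + X^i_\rstp(n) = 1$. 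Consequently the almost sure existence and value of the limit of $\tfrac{1}{n}(S_n^i - S_{n_0}^i)$ is entirely determined by the behaviour of the stochastic approximation $X(n)$ at infinity.

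With that reduction, I would invoke Theorem~\ref{th:X_as_convergence} to obtain $X(n)\to X_\infty\in\Eq$ almost surely for every $\beta\in[0,\infty)\setminus\{2\}$, and then use Lemma~\ref{lem:eqtypes} to enumerate the possibilities. For $\beta\in[0,2)$, the only equilibrium is $\bigl(\tfrac12,\tfrac12,\tfrac12,\tfrac12\bigr)$, which forces $2X^i_\rstp(\infty)-1 = 0$ for both $i$, giving $x=0$. For $\beta>2$, the three equilibria are $\bigl(\tfrac12,\tfrac12,\tfrac12,\tfrac12\bigr)$, $(w,1-w,1-w,w)$ and $(1-w,w,w,1-w)$ for a unique $w\in(0,\tfrac12)$ depending on $\beta$; Lemma~\ref{non-covergence-int} excludes the central one almost surely, so $X_\infty$ is, with probability one, one of the two stable points. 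Setting $x := 1-2w\in(0,1)$, the two points produce the limits $(x,-x)$ and $(-x,x)$ respectively, as required, and uniqueness of $x$ at fixed $\beta$ follows directly from uniqueness of $w$.

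The borderline value $\beta=2$ is not covered by Theorem~\ref{th:X_as_convergence}, so I would dispatch it separately. Since Lemma~\ref{lem:eqtypes} still gives $\Eq = \bigl\{\bigl(\tfrac12,\tfrac12,\tfrac12,\tfrac12\bigr)\bigr\}$ at $\beta=2$, and since the divergence computation $\mathrm{div}\,F = -2$ used in the proof of Theorem~\ref{th:X_as_convergence} does not depend on hyperbolicity, the same citation of Theorems~6.12 and~6.15 of~\cite{B99} yields $\CR = \Eq$. Lemma~\ref{lem:LimitSet-second-part} then forces $X(n)\to\bigl(\tfrac12,\tfrac12,\tfrac12,\tfrac12\bigr)$ almost surely, giving $x=0$ again.

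The argument is essentially an assembly job, and I do not expect a substantive obstacle: the only pieces needing care are the elementary $O(1/n)$ accounting between $(S_n^i-S_{n_0}^i)/n$ and $2X^i_\rstp(n)-1$, the matching of signs in the identification $x=1-2w$, and the extra handling of $\beta=2$ sketched above.
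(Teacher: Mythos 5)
Your proposal is correct and follows essentially the same route as the paper: the identity $(S_n^i-S_{n_0}^i)/n = 2X^i_\rstp(n)-1+O(1/n)$, then Theorem~\ref{th:X_as_convergence} and Lemma~\ref{lem:eqtypes} for $\beta\neq 2$, and Lemma~\ref{non-covergence-int} to discard the unstable equilibrium when $\beta>2$ (the paper writes $x=2w-1$ where you write $x=1-2w$, but this is immaterial since the target set is symmetric in $x\mapsto -x$). Your separate treatment of $\beta=2$ via $\mathrm{div}\,F=-2$ and Lemma~\ref{lem:LimitSet-second-part} is a genuine (and correct) addition: the paper's own proof only argues the cases $0\leq\beta<2$ and $\beta>2$, even though the statement asserts $x=0$ for all $\beta\in[0,2]$.
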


\begin{proof}
From Theorem~\ref{th:X_as_convergence}, $X$ converges almost surely
towards one element of the set $\Eq$, the set of equilibria of the
vector field $F$ in \eqref{eqn:TheField}. This set is characterised by
Lemma~\ref{lem:eqtypes}.  Noting that
\begin{equation}
  \label{eqn:Sn_increment}
  (S^i_n - S^i_{n_0})/n = 2 X^i_\rstp(n) - 1,
\end{equation}
it follows by Lemma~\ref{lem:eqtypes} that $X^i_\rstp(n)
\longrightarrow \frac{1}{2}$ a.s. for $0 \leq \beta < 2$. As a
consequence, when $0 \leq \beta < 2$ we have that
\[
  \frac{S_n^i - S_{n_0}^i}{n} \longrightarrow 0 \,\, \,\, \text{a.s.}
\]

For the case $\beta > 2$, by Lemma~\ref{lem:eqtypes} and
Lemma~\ref{non-covergence-int} there is $w \in [0, \frac12)$ such that
\[
  \lim_{n\to\infty}
  \big(X^1_\rstp(n), X^2_\rstp(n)\big)
  \ \ \in\ \
  \big\{(w, 1-w), (1-w, w)\big\}\qquad\textup{a.s.}
\]
Using \eqref{eqn:Sn_increment} and setting $x=2w-1$, it follows
that $\lim_{n\to\infty} (S_n^i-S_{n_0}^i)/n \in \big\{(x,-x),
(-x,x)\big\}$ with $0 < x < 1$. This concludes the proof of the lemma.
\end{proof}

\begin{proof}[Proof of Theorem~\ref{th:trans}]
The proof of the theorem is an immediate consequence of
Lemma~\ref{th:differenceConv}.  Indeed, if $\beta > 2$, by
Lemma~\ref{th:differenceConv} it follows that $\big(S_n^1/n,
S_n^2/n\big)$ converges a.s. to $(x, -x)$ or $(-x, x)$ where $x >
0$. Therefore we have that either $(S_n^1, S_n^2) \to (+\infty,
-\infty)$ a.s. or $(S_n^1, S_n^2) \to (-\infty, +\infty)$ a.s.
\end{proof}

The rest of this section is devoted to the proof of
Theorem~\ref{th:rec}, that is, of the recurrence of $S^1_n$ and
$S^2_n$ when $\beta \in [0, 1]$.  Observe that in this case, according
to Lemma~\ref{lem:eqtypes}, the only equilibrium of $F$ is the point
$x_* = \big(\frac{1}{2}, \frac{1}{2}, \frac{1}{2},
\frac{1}{2}\big)$. We argue that both walks $S_n^i$ are recurrent
provided the process  $X$ converges
sufficiently fast towards $x_*$. To this end, we will make use of
several lemmas. The first of these  provides a rate of
convergence of $X$ towards $x_*$ when $\beta \in [0, 1]$. This is
obtained by considering the rate at which $\phi_t(x)$ converges
towards $x_*$ and the rate for the almost sure convergence of $X$
toward the trajectories of $\Phi$. The latter relies on the shadowing
techniques described in Section 8 of \cite{B99}. The proof follows
along the lines of the proof of Lemma 3.13 in \cite{BRS13}.

\begin{lemma}\label{lem:speedconv}
If $\beta \in [0, 1]$, then
\[
   \big\|X(n) - x_*\big\| = \mathcal O\Big(\frac{1}{\sqrt{n}}\Big)
 \qquad 
 \textup{a.s.}
\]
\end{lemma}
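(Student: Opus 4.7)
The plan is to combine the exponential stability of $x_*$ for the flow $\Phi$ with a quantitative shadowing estimate, following the general blueprint of Lemma 3.13 in \cite{BRS13} and Section~8 of \cite{B99}. By Lemma~\ref{lem:eqtypes}, for $\beta \in [0,1]$ the Jacobian $\JF{x_*}$ has eigenvalues $-1, -1, -1-\beta/2, -1+\beta/2$, whose real parts are all bounded above by $-\mu$ with $\mu = 1 - \beta/2 \geq 1/2$. A standard Lyapunov function built from the quadratic form associated with $\JF{x_*}$ (or a Hartman--Grobman linearisation together with the fact that $F$ is globally Lipschitz and admits only one equilibrium on $\D$) would yield constants $K > 0$ and $\mu' \in (0, \mu]$ such that
\begin{equation*}
\|\phi_t(x) - x_*\| \leq K e^{-\mu' t}\|x - x_*\|, \qquad t \geq 0,\ x \in \D.
\end{equation*}

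The next step is to control the martingale noise $M_n = \sum_{k<n}\gamma_k U_k$ on time windows of bounded $\tau$-length. Since the entries of $\xi(k)$ are indicators, $\|\gamma_k U_k\|_\infty \leq 2\gamma_k$, and $\sum_{k=n}^{r-1}\gamma_k^2 = \mathcal{O}(1/n)$ whenever $\tau_r - \tau_n \leq T$. Azuma--Hoeffding, together with a union bound over the finitely many indices $r$ in each window, gives
\begin{equation*}
\Pb\Big(\sup_{r :\, 0 \leq \tau_r - \tau_n \leq T}\Big\|\sum_{k=n}^{r-1}\gamma_k U_k\Big\| > s\Big) \leq C_1 \exp(-C_2\, n s^2),
\end{equation*}
and a Borel--Cantelli argument along the geometric subsequence $n_k = \lfloor e^{kT}\rfloor$ upgrades this into an almost sure bound of order $n^{-1/2}$ for each fixed $T$.

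To close the argument, I would invoke the standard shadowing comparison of the Cauchy--Euler scheme \eqref{eqn:SA} to the flow. Writing $X(r) = \phi_{\tau_r - \tau_n}(X(n)) + \sum_{k=n}^{r-1}\gamma_k U_k + R_{n,r}$, a Gronwall estimate based on the Lipschitz character of $F$ and the bound $\gamma_k = \mathcal{O}(1/n)$ throughout the window shows that the deterministic remainder satisfies $\|R_{n,r}\| = \mathcal{O}(1/n)$ uniformly for $\tau_r - \tau_n \leq T$. Combining this with the flow contraction, the geometric subsequence $\{n_k\}$ above obeys
\begin{equation*}
\|X(n_{k+1}) - x_*\| \leq K e^{-\mu' T}\|X(n_k) - x_*\| + C n_k^{-1/2}.
\end{equation*}
Multiplying both sides by $\sqrt{n_{k+1}} = e^{T/2}\sqrt{n_k}$ produces a linear recursion with contraction rate $K e^{-(\mu' - 1/2)T}$, strictly less than $1$ once $T$ is taken large enough, so $\sqrt{n_k}\|X(n_k) - x_*\|$ stays bounded. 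Interpolating to arbitrary $n$ via the martingale estimate of Step~2 on the intermediate sub-window yields $\|X(n) - x_*\| = \mathcal{O}(n^{-1/2})$ almost surely.

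The main delicacy is the borderline case $\beta = 1$, where the slowest eigenvalue is exactly $-1/2$, matching the diffusive scale of the martingale noise; in the naive recursion above the contraction and expansion factors cancel and the prefactor cannot be made less than $1$ by enlarging $T$. Closing the argument at the boundary requires either a Lyapunov refinement that splits $\TsD$ into the one-dimensional critical eigendirection associated with $-1 + \beta/2$ and its complement (on which the flow contracts strictly faster than $e^{-t/2}$), or an iterated-logarithm sharpening of the Azuma tail, so that the logarithmic slack is absorbed into the claimed $\mathcal{O}(n^{-1/2})$ rate.
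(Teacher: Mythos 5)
Your outline follows the same blueprint as the paper's proof (exponential attraction of $x_*$ plus a shadowing comparison in the spirit of Section~8 of \cite{B99} and Lemma~3.13 of \cite{BRS13}), but in trying to make the shadowing quantitative by hand you run into two obstructions, and neither is resolved. First, the Azuma--Hoeffding/Borel--Cantelli step cannot deliver an almost sure bound of order $n^{-1/2}$ on the windowed noise: with $s = C n_k^{-1/2}$ the tail bound $\exp(-C_2 n_k s^2)$ is a constant, so the series over $k$ diverges; summability forces $s \gtrsim \sqrt{\log k}\; n_k^{-1/2}$, and your recursion then propagates an extra $\sqrt{\log\log n}$ factor even when it does contract. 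This is not a removable technicality: already for $\beta=0$ the coordinates of $X(n)$ are averages of i.i.d.\ Bernoulli$(\tfrac12)$ variables, and the law of the iterated logarithm gives $\limsup_n \sqrt{n}\,\|X(n)-x_*\| = \infty$ a.s., so the clean rate $\mathcal O(n^{-1/2})$ cannot be reached by any argument. What is actually attainable --- and what the paper extracts from Proposition~8.3 and Lemma~8.7 of \cite{B99} --- is the logarithmic-scale estimate $\limsup_{t\to\infty} \frac1t\log\|Y(t)-x_*\|\le -\min\{\tfrac12,\zeta\}$ for the interpolated process, i.e.\ $\|X(n)-x_*\| = n^{-1/2+o(1)}$; the paper then restates this as $\mathcal O(n^{-1/2})$, and your analysis in effect exposes that this restatement is stronger than what the cited results give.

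Second, you correctly identify that at $\beta=1$ the slowest eigenvalue $-1+\beta/2=-\tfrac12$ exactly matches the diffusive scale, so the contraction factor $Ke^{-(\mu'-1/2)T}$ in your recursion cannot be pushed below $1$; but you only gesture at two possible repairs without executing either, and the first (splitting off the critical eigendirection) does not help because the martingale noise has a nondegenerate component along that direction. The paper avoids the boundary case entirely by working at the level of exponential rates, where $\min\{\tfrac12,\zeta\}=\tfrac12$ for all $\beta\in[0,1]$ including $\beta=1$. A further small point: Hartman--Grobman is local, and uniqueness of the equilibrium does not by itself yield a global contraction estimate on $\D$; like the paper, you need the prior almost sure convergence $X(n)\to x_*$ from Theorem~\ref{th:X_as_convergence} to enter the neighbourhood where the exponential estimate (Theorem~5.1 of \cite{R99}) applies.
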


\begin{proof} Lemma~\ref{lem:eqtypes} shows that $x_*$ is the only
equilibrium of $F$ when $\beta \in [0,1]$.  This lemma also shows that
$x_*$ is hyperbolic and linearly stable. By
Theorem~\ref{th:X_as_convergence} it then follows that a.s. $X(n) \to
x_*$. Further, according to Theorem 5.1 in \cite{R99}, p. 153, we have
that the equilibrium $x_*$ is exponentially attracting. More
precisely, there is a neighbourhood $\mathcal{U} \subset \D$ of $x_*$
and two constants $C\geq 1$, $\zeta > 0$, such that for any initial
condition $x \in \mathcal{U}$, the solution $\phi_t$ of
(\ref{eqn:ODE}) satisfies
\begin{equation}
  \label{eqn:exp_rate}
  \big\|\phi_t(x) - x_*\big\| \leq Ce^{-t  \zeta}
  \big\|x - x_*\big\| \quad
  \text{ for all }\ t \geq 0.
\end{equation}
The constant $\zeta$ is such that $-\zeta$ is an upper bound for all
the eigenvalues $\lambda$ of $\JF{x_*}$, that is,
$\Re\text{e}(\lambda) \leq -\zeta < 0$. We observe that because of
(\ref{eqn:eingenv}), here we have the explicit expression $\zeta = 1 -
\beta/2$.

Let $\tau_n = \sum_{k=1}^n \gamma_k$ and let $Y:\R^+ \to \D$ be a
continuous time piecewise affine process defined such that: (\emph{i})
$Y(\tau_n) = X(n)$ and (\emph{ii}) $Y$ is affine on $[\tau_n,
\tau_{n+1}]$. $\{Y(t)\}_{t\geq 0}$ may be defined as the following
linear interpolation of $X(n)$,
\[
  Y(\tau_n + s) = X(n) + s\frac{X(n_1)-X(n)}{
    \tau_{n+1} - \tau_{n}},
  \quad\text{for}\quad
  0 \leq s \leq \gamma_{n+1}, \quad n \geq 0.
\]
By Proposition 8.3 in \cite{B99}, the interpolated process
$\{Y(t)\}_{t \geq 0}$ is almost surely a
$-\frac{1}{2}$-pseudotrajectory of $\Phi$, that is,
\begin{equation}
  \label{eqn:shadow_rate}
  \limsup_{t\to\infty} \frac1t \log\bigg(\sup_{0\leq h
    \leq T}
  \big\| \phi_h\big(Y(t)\big)-Y(t+h)\big\|\bigg) \leq -\frac{1}{2}
\end{equation}
for all $T > 0$.

In view of (\ref{eqn:exp_rate}) and (\ref{eqn:shadow_rate}), by Lemma
8.7 in \cite{B99}, it follows that
\[
  \limsup_{t\to\infty} \frac1t \log\big(\big\| Y(t) - x_*\big\|\big)
  \leq -\min\Big\{\frac12, \zeta\Big\}.
\]
This in turn implies that
\[
  \|X(n) - x_*\| = \mathcal{O}\bigg(n^{-\min\big\{\frac12,\, 
    \zeta\big\}}\bigg)
\]
and hence concludes the proof because $\zeta \in
\big[\frac12, 1\big]$ when $\beta \in [0, 1]$.
\end{proof}

\begin{lemma}\label{lem:Pn_rate}
If $\beta \in [0, 1]$, then, for any $i =1,2$, $v=\rstp, \lstp$,
\[
   \Big|\pi_v^i \big (X(n) \big ) - \frac{1}{2}\Big| = 
   \mathcal{O}\Big(\frac{1}{\sqrt{n}}\Big)
   \qquad 
 \textup{a.s.} 
\]
\end{lemma}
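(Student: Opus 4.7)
The plan is to deduce this lemma directly from Lemma~\ref{lem:speedconv} by exploiting the smoothness of the map $\pi$ defined in \eqref{eqn:pifirst}--\eqref{eqn:pipsi-novo}. The key observation is that $\pi_v^i$ is a smooth function of $x$ on the compact set $\D$ (being a composition of exponentials and rational functions with strictly positive denominators), so it is Lipschitz on $\D$ with some constant $L = L(\beta)$.

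Next, I would verify that $\pi_v^i(x_*) = \tfrac{1}{2}$ for $x_* = (\tfrac{1}{2}, \tfrac{1}{2}, \tfrac{1}{2}, \tfrac{1}{2})$; this is immediate from \eqref{eqn:pipsi-novo} since both $x^j_\lstp = x^j_\rstp = \tfrac{1}{2}$ yields $e^{-\beta/2}/(2 e^{-\beta/2}) = \tfrac12$. Alternatively this follows from the equilibrium relation $F(x_*) = 0$, i.e.\ $x_* = \pi(x_*)$, as established in Lemma~\ref{lem:eqtypes}.

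Combining these two remarks gives
\[
  \Big|\pi_v^i(X(n)) - \tfrac{1}{2}\Big|
  = \big|\pi_v^i(X(n)) - \pi_v^i(x_*)\big|
  \leq L\,\|X(n) - x_*\|,
\]
and an application of Lemma~\ref{lem:speedconv} yields the desired almost sure bound $\mathcal{O}(1/\sqrt{n})$.

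There is essentially no obstacle here: the content of the lemma is simply to transfer the rate of convergence of $X(n)$ to $x_*$ through the Lipschitz map $\pi$. The only minor point is to confirm that the Lipschitz constant is finite and $\beta$-dependent but deterministic, which follows since the partial derivatives of $\pi_v^i$ are continuous on the compact domain $\D$.
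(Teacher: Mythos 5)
Your proof is correct and follows essentially the same route as the paper: both arguments observe that $\pi_v^i(x_*)=\tfrac12$ (since $x_*$ is an equilibrium) and then transfer the rate from Lemma~\ref{lem:speedconv} through the smoothness of $\pi$, the only cosmetic difference being that you invoke a global Lipschitz constant on the compact set $\D$ whereas the paper linearizes at $x_*$ with the explicit gradient bound $\|\nabla\pi_v^i(x_*)\|_\infty=\beta/2$ plus a remainder term. Your version is, if anything, slightly cleaner, since the big-$\mathcal O$ conclusion does not require tracking the constant.
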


\begin{proof}
Let $x_* = \big(\frac{1}{2}, \frac{1}{2}, \frac{1}{2},
\frac{1}{2}\big)$. By the definition of $\pi$, namely by equations
(\ref{eqn:psi}) and (\ref{eqn:pipsi}), we have that $\| \nabla \pi_v^i
\big (x_* \big )\|_{\infty} = \beta/2 \leq \frac12$. By linearization
of $\pi_v^i \big (X(n) \big )$ at $x_*$ it follows that $\pi_v^i \big
(X(n) \big ) - \pi_v^i \big (x_* \big ) = \big\langle \nabla
\pi_v^i(x_*) ,X(n) - x_*\big\rangle + R(X(n))$, where $R(X(n))$ is the
error of the approximation. Therefore 
\begin{align*}
  \Big|\pi_v^i \big (X(n) \big ) - \frac{1}{2} \Big| 
 &= 
  \big|\pi_v^i \big (X(n) \big ) - \pi_v^i \big (x_* \big ) \big| \\ 
 &\leq 
   \big\|\nabla \pi_v^i(x_*)\big\|_{\infty} \big\|X(n) - x_*\big\|
   + \big\|R(X(n)) \big\|   \\
 &\leq
   \bigg(\frac12 + \frac{\big\|R(X(n)) \big\|}{\big\| X(n)
   - x_*\big\|}\bigg)\big\|X(n) - x_*\big\|
\end{align*}
The proof is concluded by applying Lemma~\ref{lem:speedconv}
and observing that $\|R(X(n))\|/\|X(n) - x_*\|$ converges to zero as
$X(n)$ approaches $x_*$.
\end{proof}

\begin{corollary}\label{cor:Aepsilon}
Let $P_n = \pi_r^i \big (X(n) \big )$ for some fixed $i \in
\{1,2\}$. For each $\varepsilon > 0$, there are sufficiently large $b$
and $m$, depending on $\varepsilon$, such that
\begin{equation*}
\Pb(A) > 1 - \varepsilon, \, \text{ where } \,  A =
\bigg \{ \Big | 
P_n - \frac{1}{2} \Big | \leq \frac{b}{\sqrt{n}} \,\,\, \text{for
  all}  \,\,\,  n > m \bigg\}.
\end{equation*}
\end{corollary}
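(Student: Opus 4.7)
The corollary is a standard upgrade from an almost-sure $O(1/\sqrt{n})$ rate (given by Lemma~\ref{lem:Pn_rate}) to a uniform-in-probability statement, and it follows essentially by continuity of measure. My plan is to make the big-$O$ statement quantitative as a countable union of events of the desired form, and then invoke monotone continuity.

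More concretely, for each pair $(b, m) \in \N \times \N$ I would define
\[
  B_{b,m}
  =
  \bigg\{\Big|P_n - \tfrac12\Big| \leq \frac{b}{\sqrt{n}}
  \text{ for all } n > m\bigg\}.
\]
The family $\{B_{b,m}\}$ is monotone non-decreasing in both arguments, so $B_{k,k}$ is an increasing sequence of events. The almost-sure statement of Lemma~\ref{lem:Pn_rate} unpacks to: for $\Pb$-almost every $\omega$ there exist a finite constant $C(\omega) > 0$ and an integer $N(\omega)$ such that $|P_n(\omega) - \tfrac12| \leq C(\omega)/\sqrt{n}$ for all $n \geq N(\omega)$; choosing any integer $b \geq C(\omega)$ and $m \geq N(\omega)$ places $\omega$ in $B_{b,m}$, and hence in $B_{k,k}$ for all $k$ sufficiently large. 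Therefore $\Pb\big(\bigcup_{k \in \N} B_{k,k}\big) = 1$.

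By continuity of measure along the increasing sequence $B_{k,k}$, we have $\Pb(B_{k,k}) \nearrow 1$ as $k \to \infty$. Given $\varepsilon > 0$, we can therefore choose $k$ large enough so that $\Pb(B_{k,k}) > 1 - \varepsilon$, and setting $b = m = k$ yields the conclusion. There is no real obstacle here: the only point of care is the equivalence between the big-$O$ formulation of Lemma~\ref{lem:Pn_rate} and the countable-union description of its exceptional null set, which is routine.
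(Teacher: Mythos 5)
Your proof is correct and follows essentially the same route as the paper's: both arguments exhaust the full-measure set from Lemma~\ref{lem:Pn_rate} by an increasing sequence of events indexed by a single integer bound on the pointwise constants, and then apply continuity of measure from below; your $B_{k,k}$ plays exactly the role of the paper's $\Omega_k$.
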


\begin{proof}
Let $\varepsilon > 0$ be arbitray. According to Lemma
\ref{lem:Pn_rate}, there is a set $\Omega^1 \subset \Omega$ with
$\Pb(\Omega^1) = 1$, such that $ \big|P_n(\omega) - \frac{1}{2}\big| =
\mathcal{O}\big(\frac{1}{\sqrt{n}}\big)$ for each $\omega \in
\Omega^1$. Therefore, for each $\omega \in \Omega^1$, there are well
defined constants $n(\omega) > 0$ and $b(\omega) > 0$ such that
\[
   \Big|P_n(\omega) - \frac{1}{2}\Big| \leq
   \frac{b(\omega)}{\sqrt{n}} \,\, \text{ for all } \,\, n >
   n(\omega). 
\]

Define $\Omega_k = \big\{ \omega \in \Omega^1 \, \big | \, \max 
\{b(\omega), n(\omega) \} \leq k \big\}$, $k =1, 2, 3,\ldots$ Note
that $(\Omega_k)_{k =1}^\infty$ is an increasing sequence of sets that
converges to $\Omega^1$. Since $\Pb(\Omega^1) = 1$, by continuty of
the probability measure, there is a sufficiently large $k_\varepsilon
> 0$ such that $\Pb(\Omega_{k_\varepsilon}) > 1 - \varepsilon$. Since
$A \supset \Omega_{k_\varepsilon}$ provided that $b > k_\varepsilon$
and $m > k_\varepsilon$, it follows that $\Pb(A) > 1 - \varepsilon$
for $b > k_\varepsilon$ and $m > k_\varepsilon$.
\end{proof}

\begin{lemma}\label{lemma:recurrence_pnn} Let $b > 0$ and $m > 0$,
and define $\{Z_n\}_{n \geq 0}$ as a non homogeneous random walk with
independent increments, parametrized by $b$ and $m$, as follows. Set
$Z_{n} = Z_0 + \sum_{k=1}^{n} Y_k$, where $Z_0 \in \mathbb Z$ and
$Y_1, Y_2, \ldots $ are independent random variables such that
$\mathbb{P}(Y_{n+1}=1) = p_n = 1 - \mathbb{P}(Y_{n+1}=-1)$. Let $c>0$
and $\sigma_n = \Var(Z_n)^{\frac12}$. The following implications hold
\begin{align}
   \label{eq:limsupgeq}
    p_n
  &=
    \begin{cases}
     0,&\text{if }  n \leq m \\
     \frac{1}{2} - \min\Big\{\frac12,
     \frac{b}{\sqrt{n}}\Big\},&\text{otherwhise}
    \end{cases} 
  \qquad  \Rightarrow  \qquad 
   \Pb\bigg(\limsup_{n} \frac{Z_n}{\sigma_n} \geq c \bigg) =  1, 
   \\[1em]
  \label{eq:limsupgeqCOR}
    p_n
 &=
   \begin{cases}
    1,&\text{if }  n \leq m \\
    \frac{1}{2} + \min\Big\{\frac12,
    \frac{b}{\sqrt{n}}\Big\},&\text{otherwhise}
   \end{cases}
   \qquad  \Rightarrow  \qquad 
   \Pb\bigg (\liminf_{n} \frac{Z_n}{\sigma_n} \leq -c \bigg)
   = 1.
\end{align}
\end{lemma}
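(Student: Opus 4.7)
The plan is to establish \eqref{eq:limsupgeq} directly; then \eqref{eq:limsupgeqCOR} follows from \eqref{eq:limsupgeq} applied to $\{-Z_n\}$, whose increments $-Y_k$ have success probability $1-p_{k-1}$ and hence satisfy the hypothesis of \eqref{eq:limsupgeq} with the same constants $b$ and $m$; since $\Var(-Z_n)=\sigma_n^2$, the two assertions are equivalent under this sign flip. The overall strategy for \eqref{eq:limsupgeq} is to (i) derive the leading asymptotics of $\E[Z_n]$ and $\sigma_n$, and (ii) apply Kolmogorov's law of the iterated logarithm (LIL) to the centered walk $Z_n - \E[Z_n]$.

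For (i), using $\E[Y_k]=2p_{k-1}-1$ and $\Var(Y_k)=4p_{k-1}(1-p_{k-1})$, I would check that for $k-1 > \max\{m,4b^2\}$ one has $\E[Y_k] = -2b/\sqrt{k-1}$ and $\Var(Y_k) = 1 - O(1/k)$; summing then gives $\E[Z_n] = -4b\sqrt n\,(1+o(1))$ and $\sigma_n^2 = n\,(1+o(1))$, hence $\E[Z_n]/\sigma_n \to -4b$. For (ii), the centered increments $Y_k - \E[Y_k]$ are independent, mean zero, and uniformly bounded by $2$. Since $\sigma_n^2 \to \infty$ and the boundedness condition $2 = o\big(\sigma_n/\sqrt{\log\log\sigma_n^2}\big)$ is satisfied, Kolmogorov's LIL gives
\[
  \limsup_{n\to\infty} \frac{Z_n - \E[Z_n]}{\sqrt{2\sigma_n^2 \log\log\sigma_n^2}} = 1 \quad \textup{a.s.},
\]
and in particular $\limsup_n (Z_n - \E[Z_n])/\sigma_n = +\infty$ almost surely. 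Combined with the bounded limit of $\E[Z_n]/\sigma_n$, this yields $\limsup_n Z_n/\sigma_n = +\infty$ a.s., which is strictly stronger than the claim that the limsup is at least $c$.

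The main subtlety is that the drift $\E[Y_k] \sim -2b/\sqrt{k}$ is \emph{critically} scaled, so that $\E[Z_n]$ and $\sigma_n$ both grow at rate $\sqrt n$. A plain central limit theorem alone would only yield that $(Z_n - \E[Z_n])/\sigma_n \Rightarrow N(0,1)$, which carries no almost-sure information about excursions above level $c\sigma_n$; the extra factor $\sqrt{\log\log n}$ produced by the LIL is precisely what overcomes the constant deficit $\E[Z_n]/\sigma_n \to -4b$ and drives the limsup to $+\infty$. The only routine verification required is the variance and mean expansions of step (i) and a check of Kolmogorov's boundedness hypothesis, both of which are immediate for the walk at hand.
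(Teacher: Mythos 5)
Your argument is correct, but it follows a genuinely different route from the paper's. The paper also begins by computing $\E[Z_n]/\sigma_n \to -4b$ (the same expansion you carry out in step (i)), but it then combines two softer ingredients: Kolmogorov's zero--one law, which applies because $\{\limsup_n Z_n/\sigma_n \geq c\}$ is a tail event once one notes $\sigma_n \to \infty$, and the Lindeberg central limit theorem, which shows $\Pb(Z_n/\sigma_n \geq c) \to \Pb(Z > c+4b) > 0$ for a standard normal $Z$; the reverse Fatou inequality then gives $\Pb(\limsup_n\{Z_n/\sigma_n \geq c\}) > 0$, and the zero--one law upgrades this to probability one. You instead invoke Kolmogorov's law of the iterated logarithm for independent, uniformly bounded, non-identically distributed summands, whose hypotheses you correctly verify ($|Y_k - \E Y_k| \leq 2 = o(\sigma_n/\sqrt{\log\log \sigma_n^2})$ and $\sigma_n^2 \sim n$), and conclude directly that $\limsup_n (Z_n - \E[Z_n])/\sigma_n = +\infty$ a.s., which absorbs the constant drift deficit $-4b$. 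Your reduction of \eqref{eq:limsupgeqCOR} to \eqref{eq:limsupgeq} via the sign flip $Z_n \mapsto -Z_n$ is also valid and matches the paper's remark that the second implication is proved analogously. The trade-off is that the paper's proof needs only the CLT plus tail triviality, at the cost of an extra limiting argument, whereas your LIL argument is a single, heavier tool that yields the stronger conclusion $\limsup_n Z_n/\sigma_n = +\infty$ a.s.\ in one step --- a conclusion the paper only reaches later, in the proof of Theorem~\ref{th:rec}, by letting $c \to \infty$. Your observation that the $\sqrt{\log\log}$ factor is exactly what beats the critical $n^{-1/2}$ drift is the right way to see why the exponent $\rho = \tfrac12$ in Remark~\ref{remark:speed} is borderline.
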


\begin{proof}[Proof of Lemma~\ref{lemma:recurrence_pnn}]
We will only present the proof of (\ref{eq:limsupgeq}). The proof of
(\ref{eq:limsupgeqCOR}) goes analogously. Let $A_c = \{\limsup_n
Z_n/\sigma_n \geq c\}$ and $\boocal{Z}_n = \sigma\big(\{Z_k: k \geq
n\}\big)$.  Define $\boocal Z = \bigcap_n \boocal{Z}_n$, the tail
sigma algebra generated by $Z_n$. Observe that, by the definition of
$Z_n$, we have that $\sigma_n = 2\big( \sum_{k=1}^{n}
p_k(1-p_k))^{\frac12}$, thus the event $A_c$ belongs to $\boocal Z$
because $\sigma_n \to\infty$ as $n\to\infty$. Therefore, in order to
show that $\Pb(A_c) = 1$, by Kolmogorov's zero-one law, it suffices to
show that $\Pb(A_c) > 0$.

Since $A_c = \{\limsup_n Z_n/\sigma_n \geq c\} \supseteq \lim\sup_n
\{Z_n/\sigma_n \geq c\}$, we have that
\begin{equation}
 \label{eqn:lowerbound}
 \begin{aligned}
  \Pb(A_c)
  &\geq
  \Pb\bigg(\limsup_n \Big\{\frac{Z_n}{\sigma_n} \geq c\Big\}\bigg) \\
  &\geq
  \limsup_n \Pb\bigg(\frac{Z_n}{\sigma_n} \geq c\bigg) \\
  &=
  \limsup_n \Pb\bigg(\frac{Z_n-\E[Z_n]}{\sigma_n} \geq c -
  \frac{\E[Z_n]}{\sigma_n}\bigg).
 \end{aligned}
\end{equation}

Let $Z$ be a standard normal random variable and let
$\overset{d}{\longrightarrow}$ stand for convergence in
distribution. Let us assume and argue later on that $\ell =
\lim_{n\to\infty} \mathbb{E}(Z_n)/\sigma_n$ exists and that $|\ell| <
\infty$.  By observing that the random variables $Y_n$ are uniformly
bounded and that $\sigma_n = \Var(Z_n)^{\frac12} \to \infty$ as
$n\to\infty$, it follows that Lindeberg's conditions are met and thus
the following central limit theorem holds
\[
   \frac{Z_n - \mathbb{E}(Z_n)}{\sigma_n} 
   \overset{d}{\longrightarrow}Z.
\]
Combining the limit $\ell$ with the bound in (\ref{eqn:lowerbound})
gives
\[
  \Pb(A_c) 
  \geq
 \Pb\big(Z > c - \ell\big) > 0.
\]

To conclude the proof, it remains to verify that the limit $\ell$
exists and is finite. By definition of $Z_n$, it follows that
\begin{equation}\label{conclude}
  \frac{\mathbb{E}(Z_n)}{\sigma_n} 
  =
  \frac{Z_0/2 + \sum_{k=1}^{n} p_k - n/2}
  {\sqrt{\sum_{k=1}^{n} p_k(1-p_k)}},
\end{equation}
where $p_k = \frac{1}{2}- b/\sqrt{k}$ for
sufficiently large $k$. A straightforward computation shows that the
right hand-side of (\ref{conclude}) converges to $-4b$ as $n$
goes to infinity.
\end{proof}

We are ready for the proof of Theorem~\ref{th:rec}.

\begin{proof}[Proof of Theorem~\ref{th:rec}]
Throughout the proof, $i \in \{1,2\}$ will be fixed. It is sufficient
to show that $\Pb\big(\limsup_{n} \{S^i_n = 0\} \big) = 1$.  This will
be achieved by proving that $\Pb\big(\limsup_{n} \{S^i_n = 0\} \big) >
1- \epsilon$ for arbitrary $\epsilon > 0$.  Recall that
$\pi^i_\rstp(X(n))$ is the probability of the event $\{S^i_{n+1} =
S^i_{n} + 1\}$ given $X(n)$ for $n \geq n_0$, where $n_0$ is as
defined in the Introduction. Let $\big\{U_n; n\geq 0 \big\}$ be a
sequence of independent and identically distributed uniform random
variables taking values on the open interval $(0, 1)$ and couple
$S^i_n$ with $U_n$ such that
\[
  S^i_{n+1} = S^i_{n} + 1 \quad\text{if and only if}\quad
  U_{n} \leq P_n \quad \text{ for } \,\, n \geq
  n_0, 
\]
where $P_n = \pi^i_\rstp(X(n))$.

Choose $\varepsilon > 0$ arbitrarily. In accordance to
Corollary~\ref{cor:Aepsilon}, choose $b > 0$ and $m > n_0$ such that
\begin{equation}\label{eq:Aoneminusepsilon}
\Pb( A ) > 1 - \varepsilon, \, \text{ where } \, A =
\bigg \{ \Big | 
P_n - \frac{1}{2} \Big | \leq \frac{b}{\sqrt{n}} \,\,\, \text{for
  all}  \,\,\,  n > m \bigg\}.
\end{equation}

Now, let $Z_n$ be another walk with independent increments such that 
$Z_0 = S^i_0$ and
\[
  Z_{n+1} = Z_n + 1 \,\,\, \text{ if and only if }
\,\,\, U_n  \leq 
  p_n \,\,\, \mbox{ for all } \,\,\,  n \geq 0,
\]
where
\[
  p_n =
  \begin{cases}
    0, &\text{if } 0 \leq n \leq m,              \\ 
    \frac{1}{2} -  \min\Big\{\frac12, \frac{b}{\sqrt{n}}\Big\},
    &\text{if } n > m.
  \end{cases}
\]

Observe that the walks $S^i_n$ and $Z_n$ are coupled through $U_n$ as
follows. Given that $p_n \leq P_n$, it follows that $Z_{n+1} = Z_n +
1$ implies that $S^i_{n+1} = S^i_{n} + 1$. Indeed, given that
$Z_{n+1} = Z_n + 1$ and  $p_n \leq P_n$, we have that $U_n \leq p_n 
\leq P_n$ and therefore $S^i_{n+1} = S^i_{n} + 1$. Since $Z_0 = S^i_0$ 
and $p_n = 0$ for all $n = 0, 1, 2, \ldots, m$, it follows that $S^i_n 
\geq Z_n$ for all $n \geq 0$, given the event $B = \big\{p_n \leq P_n$ 
for all $n > m \big\}$. As a consequence, we have that
\begin{equation}\label{eqn:limsupB}
\begin{aligned}
  \Pb\bigg(\limsup_{n} \frac{S^i_n}{\sigma_n} \geq  c\bigg) 
 &\geq
  \Pb\bigg(\limsup_{n} \frac{S^i_n}{\sigma_n} \geq c\, \bigg|\,
  B\bigg)\Pb(B)  
     \\ 
 &\geq
   \Pb\bigg(\limsup_{n} \frac{Z_n}{\sigma_n} \geq c\, \bigg|\, B 
   \bigg)\Pb(B)  \\
 &=
  \Pb(B), 
\end{aligned}
\end{equation}
where $\sigma_n = \Var(Z_n)^{\frac12}$. The second inequality in
\eqref{eqn:limsupB} follows by the coupling of $S_n$ and $Z_n$. The
equality in \eqref{eqn:limsupB} follows by (\ref{eq:limsupgeq}),
because $Z_n$ satisfies the hypotheses of
Lemma~\ref{lemma:recurrence_pnn}, and hence $\Pb\big(\limsup_n
Z_n/\sigma_n \geq c\big ) = 1$ as well as $\Pb\big(\limsup_n
Z_n/\sigma_n \geq c \, | \, B \big ) = 1$.

Now, using (\ref{eqn:limsupB}) and observing that $B \supseteq A$, 
we have that
\begin{equation}\label{eqn:lssigmanome}
 \Pb\bigg(\limsup_{n} \frac{S^i_n}{\sigma_n} \geq  c\bigg)  \geq
 \Pb(B) \geq  \Pb(A) \geq 1-\varepsilon,  
\end{equation}
where the last inequality in (\ref{eqn:lssigmanome}) follows by
definition of $A$ in (\ref{eq:Aoneminusepsilon}).

Since $\varepsilon$ and $c > 0$ where arbitrarily chosen, we have,
by (\ref{eqn:lssigmanome}), that $\Pb\big(\limsup_{n} S^i_n/\sigma_n
\geq c\big) = 1$ for all $c > 0$. By using \eqref{eq:limsupgeqCOR}, we
can show analogously that that $\Pb\big(\liminf_{n} S^i_n/\sigma_n
\leq -c\big) = 1$ for all $c > 0$. Using these two facts, and taking
into account that $\sigma_n$ converges to infinity as $n \to \infty$,
we conclude that
\begin{align*}
    \mathbb{P}\Big(\limsup_{n} \{S_n^i = 0\}\Big)
    &\geq
    \mathbb{P}\bigg(\limsup_{n} \frac{S_n^i}{\sigma_n} = +\infty,\ \
    \liminf_{n}  \frac{S_n^i}{\sigma_n} = - \infty \bigg) \\
    &=
    \lim_{c \rightarrow \infty} \mathbb{P}\bigg(\limsup_{n}
    \frac{S^i_n}{\sigma_n} \geq  c,\ \ \liminf_{n} \frac{S_n^i}{\sigma_n}
      \leq - c\bigg) = 1.
    \qedhere
\end{align*}
\end{proof}

\begin{remark}\label{remark:speed}
The conclusion of Lemma~\ref{lemma:recurrence_pnn}, required in the
demonstration of Theorem~\ref{th:rec}, relies
on the fact that $p_n$ converges sufficiently fast towards
$\frac{1}{2}$. The computations involved in
Lemma~\ref{lemma:recurrence_pnn} show that the rate of convergence
must be such that $p_n \leq \frac{1}{2} - b n^{-\rho}$ for $b > 0$
and $\rho = \frac12$ for suficiently large $n$.  The exponent $\rho
= \frac12$ is critical in the sense that the conclusion of
Lemma~\ref{lemma:recurrence_pnn} does not holds if $\rho <
\frac12$.  According to Lemma~\ref{lem:speedconv}, when $\beta \in
[0,1]$ we have exactly the critical rate $\rho = \frac12$. The same
arguments used throughout the proof of Lemma~\ref{lem:speedconv} also
give the estimate $\rho = \zeta$, for $\zeta = 1 - \beta/2$ when
$\beta \in (1, 2]$. This shows that the convergence of $p_n$ towards
$\frac12$ can be arbitrarily slow as $\beta \nearrow 2$, and in fact
too slow for any $\beta>1$. The question about the 
recurrence/transience of both random walks remains therefore open when 
$\beta \in (1, 2]$.
\end{remark}

\section*{Appendix}
\begin{proof}[Proof of Lemma~\ref{lem:X_is_SA}] By (\ref{eqn:xxis}),
  it follows that
\begin{align*}
  X^i_{\lstp}(n+1) - X^i_{\lstp}(n) 
  &=
    \frac{1}{n+1}\big(-X^i_\lstp(n) + \xi^i_\lstp(n)\big).
\end{align*}

Likewise, an analogous expression for $X^i_\rstp(n+1) - X^i_\rstp(n)$
can be derived in terms of $\xi^i_\rstp(n)$ and $X^i_\rstp(n)$.
Hence, by using (\ref{eqn:gamma_and_U}) and (\ref{eqn:TheField}), it
follows that  
\begin{equation}\label{eqn:increment}
  X(n+1) - X(n)
  =
  \gamma_n\Big\{ F\big (X(n) \big ) + \E[\xi(n)\mid\sF_n] - \pi\big
  (X(n) \big )  +   U_n\Big\}. 
\end{equation}
	
To conclude that (\ref{eqn:SA}) holds, we will show that
$\E[\xi(n)\mid\sF_n] - \pi(X(n)) = \mathbf{0}$. By using the
definition of the probabilities \eqref{eqn:transProb} and of $\xi(n)$
in (\ref{eqn:xis}), and further observing that $\psi$, defined in
(\ref{eqn:psi}), satisfies $1 - \psi(y) = \psi(-y)$ for all $y$, we
have that
\begin{align*}
  \E[\xi(n)\mid\sF_n]
  &=
    \big(
     \Pb(S^1_{n+1}-S^1_n=-1\mid\sF_n), \ldots, 
     \Pb(S^2_{n+1}-S^2_n= 1\mid\sF_n)
    \big)  \\
  &=
    \Big(
     \psi \Big(\frac{S^2_{0}-S^2_n}{n}  \Big),
     \psi \Big( \frac{S^2_{n}-S^2_0}{n}  \Big),
     \psi \Big(\frac{S^1_{0}-S^1_n}{n}  \Big),
     \psi \Big( \frac{S^1_{n}-S^1_0}{n}  \Big)
    \Big).  
\end{align*}
Now, since $(S^j_{0}-S^j_n)/n = 2 X_\lstp^j(n) - 1$ and
$(S^j_{n}-S^j_0)/n = 2 X_\rstp^j(n) - 1$, we conclude, by using the
definition of $\pi$, given by (\ref{eqn:pifirst}) and
(\ref{eqn:pipsi}), that
\begin{equation}\label{eqn:epi}
  \E[\xi(n)\mid\sF_n] =
  \big(
  \pi_\lstp^1 \big(X(n)\big),
  \pi_\rstp^1 \big(X(n)\big),
  \pi_\lstp^2 \big(X(n)\big),
  \pi_\rstp^2 \big(X(n)\big)\big) =
  \pi \big(X(n)\big).
\end{equation}
In view of (\ref{eqn:epi}), equation (\ref{eqn:increment})
reduces to (\ref{eqn:SA}).  
\end{proof}

\begin{proof}[Proof of Lemma~\ref{lem:Kushner}.\ref{as:KushnerLemma}]
Let $M_n = \sum_{k=0}^n \gamma_k U_k$. The process $\{M_n\}_{n\geq 0}$
is a martingale with respect to $\{\sF_n\}_{n\geq 0}$, that is
\[
   \E[M_{n+1}\mid\sF_{n+1}] = \sum_{k=0}^n \gamma_k U_k + 
   \gamma_{n+1}\E[U_{n+1}\mid\sF_{n+1}] = M_n.
\]
Observe that 
\begin{align*}
   \E\big[\|M_{n+1} - M_n\|^2\big|\ \sF_{n+1}\big] 
   &= 
   \gamma_{n+1}^2 \E\big[\|U_{n+1}\|^2\big|\ \sF_{n+1}\big]  \\
   &\leq 
   \gamma_{n+1}^2 \bigg(\sum_{v \in \{\lstp, \rstp\},\ i \in
       \{1,2\}} \xi^i_v(n+1) \bigg)^2
   \leq 
   16\, \gamma^2_{n+1}.
\end{align*}
By using Doob's decomposition for the sub-martingale $M_n^2$, consider
the predictable increasing sequence $A_{n+1} = M_n^2 + M_n$ with
$A_1=0$. The conditional variance formula for the increment $M_{n+1} -
M_n$ gives
\[
    A_{n+2}-A_{n+1} 
	= 
	\E\big[M_{n+1}^2\big|\ \sF_n\big] - M_n^2 =
        \E[\|M_{n+1}-M_n\|^2\mid  \sF_{n+1}],
\]
and hence for any $n$,
\[
   A_{n+2}
  = 
   \sum_{k=0}^n \E\big[\|M_{k+1} 
   - M_k\|^2\big|\ \sF_{n+1}\big]
  \leq
    16 \sum_{k=0}^n \gamma_{n+1}^2.
\]
Passing to the limit $n\to\infty$ shows that almost surely $A_\infty <
\infty$. According to Theorem~5.4.9 in \cite{D2010}, p. 254, this in
turn implies that $M_n$ converges almost surely to a finite limit in
$\R^{2\times2}$ and hence that $\{M_n\}_{n\geq 0}$ is a Cauchy
sequence. This is sufficient to conclude the proof.
\end{proof}

\bibliographystyle{alpha}

\vspace{24pt}

\end{document}